\documentclass[journal]{IEEEtran}

\usepackage{amsmath,amssymb,amsthm,mathtools}
\usepackage{graphicx}
\usepackage{booktabs}
\usepackage[colorlinks,citecolor=blue,urlcolor=blue,linkcolor=blue]{hyperref}
\usepackage{cite}

\graphicspath{{figures/}}

\theoremstyle{plain}
\newtheorem{theorem}{Theorem}
\newtheorem{proposition}[theorem]{Proposition}
\newtheorem{lemma}[theorem]{Lemma}
\newtheorem{corollary}[theorem]{Corollary}

\theoremstyle{definition}
\newtheorem{definition}[theorem]{Definition}
\newtheorem{assumption}[theorem]{Assumption}

\newcommand{\Om}{\Omega}
\newcommand{\Omext}{\Omega_e}
\newcommand{\Ball}{B_R}
\newcommand{\Sphere}{\mathbb{S}^{n-1}}
\newcommand{\Gam}{\Gamma}
\newcommand{\eps}{\varepsilon}
\newcommand{\R}{\mathbb{R}}
\newcommand{\F}{\mathcal{F}}
\newcommand{\Kop}{\mathcal{K}}
\newcommand{\Lop}{\mathcal{L}}
\newcommand{\concat}{\mathcal{J}}
\newcommand{\Dom}{D}

\title{Output-Feedback Boundary Control of Reaction--Diffusion PDEs on Arbitrary Lipschitz Domains: A Target-Domain Approach}

\author{Rafael~Vazquez%
\thanks{R.~Vazquez is with the Departamento de Ingenier\'{\i}a Aeroespacial y Mec\'anica de Fluidos, Universidad de Sevilla, Camino de los Descubrimientos s.n., 41092 Sevilla, Spain (e-mail: rvazquez1@us.es). The author acknowledges support of grant PID2023-147623OB-I00 funded by MICIU/AEI/ 10.13039/501100011033 and by ``ERDF A way of making Europe''.}}

\begin{document}

\maketitle

\begin{abstract}
We present a domain-extension framework for output-feedback boundary stabilization of reaction-diffusion equations on arbitrary bounded Lipschitz domains, including non-convex and multiply connected geometries. The plant is posed on an irregular domain whose boundary has actuated and uncontrolled portions. Just as backstepping transforms the plant dynamics into a stable target system, the method embeds the plant in a \emph{target domain}, such as a ball or a rectangle, where a stabilizing design is already known. Every boundary portion through which the extension proceeds must carry actuation and the complementary collocated measurement. Uncontrolled portions are allowed when they are shared with the target boundary and have the same boundary-condition type. The gap between the two domains is filled with a virtual copy of the plant dynamics, coupled to the plant through interface conditions, and the concatenated state evolves exactly as the known closed loop on the target domain. Well-posedness and exponential stability of the physical state follow by restriction. The offline design data are inherited from the target design and are closed-form for constant-coefficient plants on balls and rectangles. Online simulation of the virtual PDE has the same computational character as a full-order PDE observer, a standard component of output-feedback designs. A new explicit Neumann-actuated backstepping law on $n$-balls enlarges the available target designs. Output feedback is obtained by lifting the target-domain observer, driven by a collocated interface measurement relayed through the virtual domain. Numerical experiments on star-shaped, horseshoe, and multiply connected domains, with a partitioned plant/controller implementation and a shared-wall cavity, test the designs.
\end{abstract}

\begin{IEEEkeywords}
Backstepping, PDE control, arbitrary domains, domain extension, target domain, reaction-diffusion equations, transmission systems, output feedback, observer design.
\end{IEEEkeywords}

\section{Introduction}\label{sec:intro}

Backstepping has become a central tool for boundary control of parabolic partial differential equations (PDEs), beginning with the foundational work of Krstic and Smyshlyaev~\cite{krstic} on one-dimensional reaction-diffusion equations; for a recent comprehensive survey, see~\cite{survey}. The method transforms the plant into a stable target system via a Volterra-type integral operator, reducing controller design to the solution of a kernel PDE. In one spatial dimension this programme is well understood and has been extended to coupled systems~\cite{aamo,simon}, hyperbolic problems~\cite{auriol,long-nonlinear}, and nonlinear settings~\cite{KKK,krstic5,VK-volterra1,VK-volterra2}.

Moving to higher spatial dimensions introduces fundamental challenges, and progress has historically been tied to symmetry. On rectangular domains, Fourier decomposition reduces the problem to a parameterized family of one-dimensional backstepping designs, an idea rooted in the spatial invariance framework of Bamieh et al.~\cite{bamieh2002distributed}. For domains with rotational symmetry (disks, spheres, and $n$-dimensional balls) angular eigenfunction expansions decouple the problem into radial modes, with explicit kernels in terms of modified Bessel functions~\cite{VK-disk,nball,VK-sphere,VK-square-CDC,Qi-square} and series-form kernels when the reaction coefficient varies radially~\cite{scl2023,zhang2024}. Backstepping observers on higher-dimensional domains were investigated early on by Jadachowski et al.~\cite{jadachowski}. Recent work includes an extended abstract on graph-monotone boundaries~\cite{belhadjoudja-graph} and characteristic-based designs for first-order hyperbolic equations in arbitrary dimensions~\cite{belhadjoudja-hyp1,belhadjoudja-hyp2}.

The practical limitation is that most engineering geometries are neither rectangular nor radially symmetric. For general domains, boundary stabilization is classical at the level of existence and of spectral synthesis: Riccati-based and eigenfunction-based feedback laws are available in great generality~\cite{triggiani,Barbu,raymond,munteanu-book,lagnese}, and constructive finite-dimensional observer-based designs, developed to maturity in one dimension~\cite{katz-fridman,lhachemi-prieur}, have recently reached two and three dimensions~\cite{wang-fridman,lmp2025}. All these methods share a structural feature: the synthesis problem is solved numerically on the irregular domain itself (an operator Riccati equation, the domain's unstable eigenstructure, or matrix inequalities built on its eigenbasis) and is solved anew for every change of geometry. The constructive multidimensional results are confined to rectangles~\cite{wang-fridman}, or treat smooth domains under diagonalizability assumptions, with a finite-dimensional observer built from the domain's eigenstructure and driven by in-domain point measurements~\cite{lmp2025}. The higher-dimensional backstepping observers of~\cite{jadachowski} and the flatness and backstepping designs of~\cite{meurer} likewise require separable coordinate domains. To the best of our knowledge, no boundary feedback design whose offline synthesis is entirely closed-form (no Riccati equation, eigenproblem, or kernel computation on the domain itself) has been available for arbitrary domains in dimension two or higher.

This paper provides one. The plant is posed on an arbitrary bounded Lipschitz domain $\Om$, possibly non-convex, possibly with holes; part of its boundary may be uncontrolled, carrying homogeneous Dirichlet or Neumann conditions, and the remaining part is actuated. The idea is to \emph{extend} $\Om$ through the controlled boundaries to a domain on which a stabilizing design is already known. The framework accepts any such design; we instantiate it with the backstepping designs on balls and rectangles recalled in Section~\ref{sec:toolbox}, because their control laws are closed-form. A virtual copy of the plant PDE fills the gap between the two domains, coupled to the physical state through interface conditions enforcing continuity of trace and flux. The concatenated real/virtual state then satisfies \emph{exactly} the known closed loop on the larger domain, and well-posedness and exponential stability of the physical state follow by restriction.

\emph{The target domain.} The construction is a geometric counterpart of the classical backstepping target system. A backstepping transformation keeps the spatial domain and changes the dynamics. Domain extension keeps the dynamics and changes the geometry. We reserve \emph{target domain} for the containing set $\Dom$. The complete object used by the method is a \emph{target-domain configuration}: $\Dom$, its boundary conditions, the actuator and sensor sets, and a stabilizing design for that layout (Definition~\ref{def:target}). Thus, the same ball with Dirichlet and Neumann target designs gives two configurations. The design reads \emph{extend--transform--restrict}: extension handles the geometry, the inner transformation handles the dynamics, and restriction returns the physical state. The interface makes this embedding dynamical as well as geometric: the plant feels exactly the boundary data it would feel as a sub-region of the controlled target, so $\Om$ evolves as if it were a piece of the ball or rectangle, its dynamics shaped by the target geometry. Table~\ref{tab:parallel} summarizes the parallel.

\begin{table}[t]
\caption{The target-domain construction as a geometric counterpart of backstepping.}
\label{tab:parallel}
\centering
\begin{tabular}{lll}
\toprule
 & Backstepping & Domain extension \\
\midrule
Object transformed & dynamics & geometry \\
Plant & $u_t=\eps\Delta u+\lambda u$ & irregular $\Om$ \\
Target & \emph{target system} & \emph{target domain} $\Dom$ \\
Complete object & dynamics + BCs & target-domain config. \\
Map & Volterra transf. & extension \\
Inverse & inverse transf. & restriction \\
Design burden & kernel PDE & interface conditions \\
\bottomrule
\end{tabular}
\end{table}

\emph{Relation to classical extension arguments and fictitious domains.} Extending a domain to prove controllability is a classical device in the open-loop theory of the heat equation: boundary controllability can be deduced from distributed controllability of an extended domain by restriction~\cite{lions-srev,fursikov}. Embedding an irregular domain into a simple shape also underlies fictitious domain methods in numerical analysis~\cite{glowinski}. Here the extension is part of a feedback interconnection, and the coupled state satisfies the target closed loop exactly. The graph-monotone construction reported in~\cite{belhadjoudja-graph} follows a different route and avoids an auxiliary PDE on the class treated there. Domain extension covers arbitrary bounded Lipschitz domains through the full-boundary ball configuration and also lifts the ball observer.

\emph{What is explicit and what is computed.} The framework inherits the offline synthesis of its target-domain configuration. For balls and rectangles, the kernels used here are closed form; no eigenproblem or kernel equation is solved on the irregular domain. A change of geometry changes the controller domain and its numerical mask. Online, the controller solves the virtual PDE. This is not a new type of computational burden: full-order observer-based PDE controllers already integrate a PDE in real time. In state feedback, the virtual continuation is one observer-sized PDE computation. In output feedback, the target observer is added, and the two controller states can be integrated as one coupled block system. Thus the method remains in the same computational class as PDE observer-based control, with a fixed additional factor rather than a new geometry-dependent synthesis problem. We therefore use \emph{closed-form synthesis} rather than \emph{explicit feedback} for the complete controller. The method requires actuation and complementary sensing on the boundary through which the extension proceeds. Shared uncontrolled boundaries reduce that requirement, as Section~\ref{sec:coupled} makes precise.

\emph{The interface decouples actuation types.} When the extension proceeds through an actuated boundary, the interface carries two matching conditions, of trace and of flux, and the plant's own actuation type decides which of the two the controller imposes; the other is measured and fed to the virtual domain. A Dirichlet-actuated plant can thus be served by the Dirichlet ball design, and so can a Neumann-actuated one: the virtual domain converts between actuation types. Matching of types is required only on shared boundaries, where no virtual buffer exists. The compatibility conditions of Section~\ref{sec:coupled} collect these rules.

\emph{Contributions.} First, we set up the domain-extension framework: for a plant on an arbitrary bounded Lipschitz domain and a compatible target-domain configuration, a Sobolev gluing argument and a broken weak formulation give a two-way equivalence with the target closed loop, so that well-posedness and exponential stability transfer to the physical state by restriction, with the target rate and prefactor intact for every embedded shape.

Second, we read off the boundary architecture the method needs: the extended boundary must carry actuation and the collocated measurement, uncontrolled boundaries are admitted only when shared with the target, and the virtual PDE lets a Dirichlet- or Neumann-actuated plant use the same target design.

Third, we lift the target observer to obtain output feedback from the interface measurement alone. Fourth, we derive an explicit Neumann-actuated ball law, enlarging the set of available targets. Finally, we test the designs on star-shaped, horseshoe, multiply connected, and shared-wall geometries with separate plant and controller solvers.

The transfer argument does not require a particular target shape. Once a target-domain configuration is fixed, its stability constants are independent of the shape of the compatible physical domain. The ball is the main realization developed in full: any bounded physical domain can be placed strictly inside a ball, in which case its entire boundary is extended and must be actuated and sensed. Rectangular configurations cover an important partial-boundary case. Straight uncontrolled walls can be shared with the rectangle, leaving only the remaining irregular boundary to be extended.

It must be acknowledged that the domain-extension idea first appeared in~\cite{VK-Milano}, an exploratory article on possible routes for higher-dimensional backstepping. That paper introduced the geometric intuition and sketched the state-feedback construction. It did not formulate the compatibility conditions for extended and shared boundaries, give a weak transmission formulation, prove equivalence and stability transfer, treat output feedback, derive the Neumann-actuated ball law, or test a partitioned plant/controller implementation. The present paper develops those points. Thus~\cite{VK-Milano} is the conceptual precursor, while the analysis, output-feedback architecture, new target design, and numerical study are contributions of this paper.

\emph{Organization.} Section~\ref{sec:toolbox} reviews the backstepping designs on balls and rectangles and derives the Neumann-actuated law. Section~\ref{sec:coupled} poses the plant, defines target domains and compatibility, and forms the coupled system. Section~\ref{sec:reduction} proves equivalence, well-posedness, and stability transfer. Section~\ref{sec:observer} develops output feedback. Section~\ref{sec:numerics} presents the numerical campaign, and Section~\ref{sec:conclusions} concludes.

\section{Backstepping Designs on Canonical Target Domains}\label{sec:toolbox}

Throughout, $n\geq 2$ is the dimension, $\eps>0$ is the diffusion coefficient, $\lambda>0$ the (destabilizing) reaction coefficient, and $c>0$ a design parameter setting the target decay rate. We write $\mu=(\lambda+c)/\eps$. This section is a recap, included so that the paper is self-contained: except for the Neumann-actuated design of Section~\ref{sec:neumann}, which is new, the material is reproduced from~\cite{nball} (with the target damping added) and~\cite{scl2023,zhang2024}.

\subsection{Ball, Dirichlet actuation: plant, target, transformation}\label{sec:ball-recap}

Consider the reaction-diffusion plant on the ball $\Ball=\{x\in\R^n:|x|<R\}$,
\begin{align}
v_t(t,x) &= \eps \Delta v(t,x)+\lambda v(t,x), \qquad x\in \Ball,\ t>0, \label{eq:ball-pde}\\
v(t,x) &= U(t,x), \qquad x\in \partial \Ball,\ t>0, \label{eq:ball-bc}
\end{align}
with full Dirichlet actuation $U$ on $\partial\Ball$. Backstepping maps \eqref{eq:ball-pde}--\eqref{eq:ball-bc} to the exponentially stable target system
\begin{align}
w_t &= \eps \Delta w-cw, && x\in \Ball,\ t>0, \label{eq:target-pde}\\
w &= 0, && x\in \partial \Ball,\ t>0, \label{eq:target-bc}
\end{align}
which satisfies $\|w(t)\|_{L^2(\Ball)}\le e^{-ct}\|w(0)\|_{L^2(\Ball)}$ by a standard energy estimate. The transformation acts modewise in the spherical harmonics expansion $v(t,r\omega)=\sum_{\ell\ge 0}\sum_{m} v_{\ell,m}(t,r)Y_{\ell,m}(\omega)$, $\omega\in\Sphere$: with
\begin{equation}\label{eq:transformation}
w_{\ell,m}(t,r) = v_{\ell,m}(t,r)-\int_0^r K_\ell(r,s)\,v_{\ell,m}(t,s)\,ds,
\end{equation}
the kernel has the closed form~\cite[eq.~(54)]{nball}
\begin{equation}\label{eq:kernel-bessel}
K_\ell(r,s) = -\,\mu\, s\left(\frac{s}{r}\right)^{\!\ell+n-2} \frac{I_1\!\bigl(\sqrt{\mu(r^2 - s^2)}\bigr)}{\sqrt{\mu(r^2 - s^2)}},
\end{equation}
where $I_1$ is the modified Bessel function of the first kind. (The kernels of~\cite{nball} correspond to $c=0$, a zero-reaction target; the substitution $\lambda\mapsto\lambda+c$ used here adds the target damping $-cw$ and changes nothing in the derivations.) The inverse transformation has the same structure with kernel $L_\ell(r,s) = -\mu s (s/r)^{\ell+n-2} J_1(\zeta)/\zeta$, $\zeta=\sqrt{\mu(r^2-s^2)}$, where $J_1$ is the (ordinary) Bessel function~\cite[eq.~(92)]{nball}. We write $\mathcal T=I-\Kop$ and $\mathcal T^{-1}=I+\Lop$ for the corresponding operators on $L^2(\Ball)$. Both are bounded on $L^2(\Ball)$ and $H^1(\Ball)$ with constants independent of the mode index~\cite[Prop.~1]{nball}.

The feedback law enforcing the target boundary condition \eqref{eq:target-bc} is the trace of the transformation,
\begin{equation}\label{eq:ball-feedback}
U = \F[v] := (\Kop v)\big|_{\partial\Ball},
\end{equation}
acting modewise as $U_{\ell,m}=\int_0^R K_\ell(R,s)\,v_{\ell,m}(s)\,ds$. Summing the spherical harmonics produces the law in physical space~\cite{nball}: for $x\in\partial\Ball$,
\begin{equation}\label{eq:physical-law}
U(x) = -\frac{\sqrt{\mu}}{A_{n-1}}\!\int_{\Ball}\! I_1\!\Bigl(\sqrt{\mu(R^2{-}|\xi|^2)}\Bigr) \frac{\sqrt{R^2-|\xi|^2}}{|x-\xi|^{n}}\, v(\xi)\, d\xi,
\end{equation}
where $A_{n-1}=2\pi^{n/2}/\Gamma(n/2)$ is the area of the unit sphere: a single closed-form expression, valid in every dimension, whose kernel is the product of the one-dimensional backstepping kernel (a purely radial factor) and a Poisson-kernel-like factor carrying the geometry of the sphere.

\begin{theorem}[{Ball closed loop, Dirichlet actuation~\cite{nball}}]\label{thm:ball-backstepping}
For every $v_0\in L^2(\Ball)$, the closed loop \eqref{eq:ball-pde}--\eqref{eq:ball-bc}, \eqref{eq:ball-feedback} has a unique weak solution, given by $v=(I+\Lop)w$ with $w$ the standard weak solution of \eqref{eq:target-pde}--\eqref{eq:target-bc} from $w_0=(I-\Kop)v_0$. There is a constant $C_{\mathcal T}\ge 1$, independent of $v_0$, such that
\begin{equation}\label{eq:ball-decay}
\|v(t,\cdot)\|_{L^2(\Ball)} \leq C_{\mathcal T}\, e^{-ct}\|v_0\|_{L^2(\Ball)},\qquad t\ge 0.
\end{equation}
\end{theorem}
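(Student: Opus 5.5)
The plan is to build the closed-loop solution from the target system and transfer every property through the bounded operators $\mathcal T=I-\Kop$ and $\mathcal T^{-1}=I+\Lop$.

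First, for $w_0=(I-\Kop)v_0\in L^2(\Ball)$, the target problem \eqref{eq:target-pde}--\eqref{eq:target-bc} has a unique weak solution $w\in C([0,\infty);L^2(\Ball))\cap L^2_{\rm loc}(0,\infty;H^1_0(\Ball))$. This is standard Galerkin/semigroup theory for the Dirichlet Laplacian. Testing with $w$ gives $\tfrac12\tfrac{d}{dt}\|w\|^2=-\eps\|\nabla w\|^2-c\|w\|^2\le -c\|w\|^2$, hence $\|w(t)\|\le e^{-ct}\|w_0\|$.

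Second, define $v=(I+\Lop)w$ and check that it is a weak solution of the closed loop. I will work modewise in spherical harmonics. For each $(\ell,m)$, the kernels $K_\ell$ and $L_\ell$ satisfy the kernel PDEs of \cite{nball}, with $\lambda$ replaced by $\lambda+c$. The standard one-dimensional integration-by-parts computation in the radial variable, with weight $r^{n-1}$ and the $\ell(\ell+n-2)/r^2$ term, then shows three things:
\begin{itemize}
\item $v_{\ell,m}$ satisfies the radial plant equation;
\item its boundary value at $r=R$ equals $\int_0^R K_\ell(R,s)v_{\ell,m}\,ds$, because $w_{\ell,m}(R)=0$ and $\mathcal T\mathcal T^{-1}=I$;
\item its behavior at $r=0$ is regular.
\end{itemize}
I will do this first for smooth initial data, or for finitely many modes, where everything is classical. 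I will then pass to general $v_0\in L^2$ by density. This uses the uniform-in-$\ell$ boundedness of $\mathcal T$ and $\mathcal T^{-1}$ on $L^2(\Ball)$ and $H^1(\Ball)$ \cite[Prop.~1]{nball}. That boundedness lets me sum the modes and pass to the limit in the weak formulation. In this weak formulation the test functions vanish on $\partial\Ball$, and the boundary condition holds in the trace sense $v|_{\partial\Ball}=(\Kop v)|_{\partial\Ball}$, which makes sense since $v(t)\in H^1$ for a.e.\ $t>0$.

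Third, for uniqueness, let $v$ be any weak solution of the closed loop. Then $\tilde w=(I-\Kop)v$ is a weak solution of the target system, by the same modewise computation run in the forward direction, and $\tilde w$ has zero trace by construction of $\F$. Uniqueness for the target system then forces $v=(I+\Lop)\tilde w=(I+\Lop)w$.

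Finally, the decay estimate is immediate:
\[
\|v(t)\|\le\|I+\Lop\|\,e^{-ct}\,\|I-\Kop\|\,\|v_0\|.
\]
So I set $C_{\mathcal T}=\|I+\Lop\|\,\|I-\Kop\|$. This constant is at least 1 because $C_{\mathcal T}\ge\|\mathcal T^{-1}\mathcal T\|=1$.

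The main obstacle is the second step: justifying that the weak formulation is preserved under the transformation at low regularity. This includes the singular factors $(s/r)^{\ell+n-2}$ near the origin and the interchange of the infinite mode sum with the integrals. I expect the uniform bounds of \cite[Prop.~1]{nball}, together with a density argument, to handle this.
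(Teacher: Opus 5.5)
Your proposal is correct and takes the same route the paper relies on: the paper cites \cite{nball} for this theorem and sketches the argument in Section~\ref{sec:reduction} and in the proof of Proposition~\ref{prop:neumann}. That argument solves the damped target system, sets $v=(I+\Lop)w$, and transfers the regularity and the decay $\|w(t)\|\le e^{-ct}\|w_0\|$ through the $L^2$/$H^1$ bounds of \cite[Prop.~1]{nball}, which gives $C_{\mathcal T}=\|I+\Lop\|\,\|I-\Kop\|\ge 1$. Your uniqueness step via $\tilde w=(I-\Kop)v$ and your density argument fill in details the paper leaves to \cite{nball}, without changing the approach.
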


For output feedback, the measurement of the ball design is the boundary flux
\begin{equation}\label{eq:ball-measurement}
y(t,\xi) = \partial_{r} v(t,\xi),\qquad \xi\in\partial\Ball,
\end{equation}
and the observer of~\cite[Thm.~2]{nball} copies the plant with output injection,
\begin{align}
\hat v_t &= \eps\Delta\hat v + \lambda\hat v + \mathcal{P}\!\left[y-\partial_{r}\hat v\big|_{\partial\Ball}\right], && x\in \Ball, \label{eq:obs-ball-pde}\\
\hat v &= U, && x\in \partial\Ball, \label{eq:obs-ball-bc}
\end{align}
where $\mathcal P$ acts modewise as multiplication by the explicit gain
\begin{equation}\label{eq:obs-gain}
p_\ell(r) = -(\lambda+c)\, r\left(\frac{r}{R}\right)^{\!\ell-1} \frac{I_1\!\bigl(\sqrt{\mu(R^2-r^2)}\bigr)}{\sqrt{\mu(R^2-r^2)}},
\end{equation}
obtained as $p_\ell=\eps P_\ell(\cdot,R)$ from the observer kernel of~\cite{nball} (the exponent $\ell-1$ in \eqref{eq:obs-gain} does not depend on the dimension). The output-feedback closed loop uses $U=\F[\hat v]$. By~\cite[Thm.~2]{nball}, if $v_0\in H^1_0(\Ball)$ and $\hat v_0=0$, there exist $M_{\mathrm{of}}\geq 1$ and $c_{\mathrm{of}}>0$ such that, with $\tilde v = v-\hat v$,
\begin{equation}
\|v(t)\|_{H^1(\Ball)}+\|\tilde v(t)\|_{H^1(\Ball)}
\leq M_{\mathrm{of}} e^{-c_{\mathrm{of}}t}\|v_0\|_{H^1(\Ball)}.
\label{eq:of-ball-decay}
\end{equation}
The constant in \eqref{eq:of-ball-decay} has been enlarged, if needed, to replace the estimate for $(v,\hat v)$ in~\cite{nball} by the equivalent estimate for $(v,\tilde v)$.

\subsection{Ball, Neumann actuation: a new explicit law}\label{sec:neumann}

The designs of~\cite{nball} are Dirichlet-actuated. A Neumann-actuated explicit law on the $n$-ball is new, to the best of our knowledge, yet it follows from the \emph{same} kernel \eqref{eq:kernel-bessel} at almost no cost, because the Volterra structure of \eqref{eq:transformation} does not involve the boundary condition at $r=R$: the kernel is determined by the PDE and by the conditions on the diagonal and at the origin, while the target boundary condition is \emph{enforced by the choice of control}. Differentiating \eqref{eq:transformation} in $r$ at $r=R$ and setting $\partial_r w_{\ell,m}(t,R)=0$ yields the flux control law. The target system is now \eqref{eq:target-pde} with homogeneous Neumann condition $\partial_r w|_{\partial\Ball}=0$, for which the energy estimate gives $\|w(t)\|\le e^{-ct}\|w(0)\|$; here the damping $c>0$ is essential rather than cosmetic, since the zero-reaction Neumann target has a constant mode that does not decay.

\begin{proposition}[Neumann-actuated ball backstepping]\label{prop:neumann}
Consider the plant \eqref{eq:ball-pde} with Neumann actuation $\partial_r v(t,x)=U^N(t,x)$ on $\partial\Ball$, and the modewise feedback
\begin{equation}\label{eq:neumann-law}
U^N_{\ell,m} = K_\ell(R,R)\, v_{\ell,m}(R) + \int_0^R \partial_r K_\ell(r,s)\big|_{r=R}\, v_{\ell,m}(s)\,ds,
\end{equation}
where $K_\ell(R,R)=-\mu R/2$ and, with $z=\sqrt{\mu(R^2-s^2)}$,
\begin{equation}\label{eq:neumann-kernel}
\partial_r K_\ell(R,s) = \mu s\left(\frac{s}{R}\right)^{\!\ell+n-2}\!\left[\frac{(\ell{+}n{-}2)}{R}\frac{I_1(z)}{z} - \mu R\,\frac{I_2(z)}{z^2}\right].
\end{equation}
Then for every $v_0\in L^2(\Ball)$ the closed loop has a unique weak solution and satisfies
\begin{equation}
\|v(t,\cdot)\|_{L^2(\Ball)} \leq C_{\mathcal T}\, e^{-ct}\|v_0\|_{L^2(\Ball)},\qquad t\ge 0.
\end{equation}
\end{proposition}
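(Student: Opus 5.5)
The proof reuses the transformation $\mathcal T=I-\Kop$ of Section~\ref{sec:ball-recap} without change and only alters the boundary condition that the feedback enforces. The kernel $K_\ell$ in \eqref{eq:kernel-bessel} satisfies the modewise kernel PDE, the diagonal condition $K_\ell(r,r)=-\mu r/2$, and regularity at the origin; none of these involves $r=R$. Hence, for each mode, the computation of~\cite{nball} (with $\lambda\mapsto\lambda+c$) gives $\partial_t w_{\ell,m}=\eps\,(\text{radial Laplacian}_\ell)\,w_{\ell,m}-c\,w_{\ell,m}$ on $(0,R)$. That computation integrates by parts in $s$ only, so it never uses the boundary value at $r=R$.

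The boundary condition at $r=R$ comes from differentiating \eqref{eq:transformation} in $r$:
\[
\partial_r w_{\ell,m}(R)=\partial_r v_{\ell,m}(R)-K_\ell(R,R)v_{\ell,m}(R)-\int_0^R\partial_rK_\ell(R,s)v_{\ell,m}(s)\,ds .
\]
So the choice \eqref{eq:neumann-law} is exactly what gives $\partial_r w_{\ell,m}(R)=0$. I would check \eqref{eq:neumann-kernel} directly. Write $K_\ell=-\mu s^{\ell+n-1}r^{-(\ell+n-2)}f(\zeta)$ with $f(z)=I_1(z)/z$ and $\zeta=\sqrt{\mu(r^2-s^2)}$. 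Then use $\partial_r\zeta=\mu r/\zeta$ and $f'(z)=I_2(z)/z$. The product rule gives the two bracketed terms, with the correct signs. The value $K_\ell(R,R)=-\mu R/2$ follows from $I_1(z)/z\to 1/2$.

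For $L^2$ well-posedness I would argue in the weak sense. The target system \eqref{eq:target-pde} with $\partial_r w=0$ generates an analytic contraction semigroup on $L^2(\Ball)$, and the energy identity $\tfrac{d}{dt}\tfrac12\|w\|^2=-\eps\|\nabla w\|^2-c\|w\|^2$ gives $\|w(t)\|\le e^{-ct}\|w_0\|$. I would define $v=(I+\Lop)w$ with $w_0=(I-\Kop)v_0$ and apply the boundedness of $\mathcal T^{\pm1}$ on $L^2$ and $H^1$ uniformly in $\ell$~\cite[Prop.~1]{nball}. This yields the bound with $C_{\mathcal T}=\|\mathcal T\|\,\|\mathcal T^{-1}\|$, the same constant as in Theorem~\ref{thm:ball-backstepping}. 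Uniqueness follows by running the argument backwards: any weak solution of the closed loop is mapped by $\mathcal T$ to a weak solution of the Neumann target, which is unique.

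The main obstacle is making this modewise transfer rigorous at the weak level. The feedback \eqref{eq:neumann-law} involves the trace $v_{\ell,m}(R)$, which is not defined for $L^2$ data. I would handle this by working first with classical solutions on $t>0$, using the parabolic smoothing of the target: $w(t)\in H^2$ for $t>0$, and $\mathcal T^{-1}$ preserves this regularity modewise. On such solutions the trace and the flux feedback make sense, and the identity can be verified pointwise in time. I would then extend to all $v_0\in L^2$ by density, relying on the uniform bounds on the operator norms. It remains to confirm that the modewise kernel bounds extend to $H^2$, or at least to the normal-trace level. If they do not, I would instead define the solution of the closed loop as $v=\mathcal T^{-1}w$, the interpretation already adopted in Theorem~\ref{thm:ball-backstepping}.
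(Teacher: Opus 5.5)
Your proposal is correct and follows the paper's proof essentially step by step. The kernel conditions do not involve $r=R$. Differentiating \eqref{eq:transformation} at $r=R$ shows that \eqref{eq:neumann-law} is equivalent to $\partial_r w_{\ell,m}(R)=0$. The kernel derivative follows from $\tfrac{d}{dz}(I_1(z)/z)=I_2(z)/z$ and $\partial_r z=\mu r/z$. The Neumann energy identity, together with the boundedness of $\Lop$, then transfers the decay. Your extra care about the undefined trace for $L^2$ data goes beyond the paper, which handles that point only by invoking the interpretation $v=\mathcal T^{-1}w$ already used in Theorem~\ref{thm:ball-backstepping}; that is also your fallback.
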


\begin{proof}
The transformation \eqref{eq:transformation} maps the plant equation to \eqref{eq:target-pde} modewise, irrespective of the outer boundary condition: the kernel equation and its conditions on $s=r$ and $s=0$ are those of~\cite{nball} and do not involve $r=R$. Differentiating \eqref{eq:transformation} in $r$ gives
\begin{multline*}
\partial_r w_{\ell,m}(R) = \partial_r v_{\ell,m}(R) - K_\ell(R,R)v_{\ell,m}(R) \\ - \int_0^R \partial_r K_\ell(R,s)v_{\ell,m}(s)\,ds,
\end{multline*}
so \eqref{eq:neumann-law} is equivalent to $\partial_r w_{\ell,m}(R)=0$. Formula \eqref{eq:neumann-kernel} follows by direct differentiation, using $\tfrac{d}{dz}(I_1(z)/z)=I_2(z)/z$ and $\partial_r z = \mu r/z$; the diagonal value follows from $I_1(z)/z\to 1/2$.

Multiplication of the Neumann target equation by $w$ and integration over $\Ball$ give
\begin{equation}\label{eq:neumann-energy}
\frac{1}{2}\frac{d}{dt}\|w(t)\|_{L^2(\Ball)}^2
=-\eps\|\nabla w(t)\|_{L^2(\Ball)}^2-c\|w(t)\|_{L^2(\Ball)}^2,
\end{equation}
so $\|w(t)\|_{L^2}\le e^{-ct}\|w_0\|_{L^2}$. With $v=(I+\Lop)w$ and $\Lop$ bounded on $L^2(\Ball)$~\cite[Prop.~1]{nball}, the estimate follows for every $v_0\in L^2(\Ball)$, as in Theorem~\ref{thm:ball-backstepping}.
\end{proof}

The Dirichlet and Neumann ball laws define different target-domain configurations. The Dirichlet target decays at rate $c+\eps\lambda_1(\Ball)$, while the Neumann target has the guaranteed rate $c$; their overshoot constants and boundary signals also differ. The physical actuation type on the irregular boundary is a separate choice when the extension proceeds through that boundary (Section~\ref{sec:coupled}).

\subsection{Radially-varying coefficients and the choice of center}\label{sec:radial}

When the reaction coefficient varies radially, $\lambda=\lambda(r)$, the constant-coefficient kernels \eqref{eq:kernel-bessel} no longer apply. The power-series designs of~\cite{scl2023,zhang2024} cover radial coefficients satisfying the analyticity and evenness assumptions stated there. Their kernels retain the modewise Volterra structure. The center of the target ball is then fixed by the coefficient: if the physical coefficient is $\lambda(|x-p|)$, the ball is centered at $p$ and the same coefficient is used in the virtual region.

\subsection{Rectangular target domains}\label{sec:square}

On a rectangle with actuation on one side and homogeneous conditions on the others, Fourier expansion in the tangential variable reduces the design to a family of one-dimensional kernels~\cite{krstic,bamieh2002distributed}, in the same closed Bessel form as \eqref{eq:kernel-bessel}. Per mode, both collocated observers (measurement on the actuated side) and anti-collocated observers (measurement on the opposite side) are classical~\cite{krstic}. Rectangular targets are the natural choice when part of the physical boundary consists of straight uncontrolled walls.

\section{The Domain-Extension Framework}\label{sec:coupled}

\subsection{The physical plant}\label{sec:plant}

The plant is a reaction-diffusion equation on a bounded Lipschitz domain $\Om\subset\R^n$, possibly non-convex, possibly multiply connected:
\begin{equation}\label{eq:plant}
u_t = \eps\Delta u + \lambda u, \qquad x \in \Om,\ t>0.
\end{equation}
Its boundary splits as $\partial\Om = \Gam_u \cup \Gam_c$: an uncontrolled part $\Gam_u$ (possibly empty) carrying homogeneous Dirichlet or Neumann conditions, and an actuated part $\Gam_c$, with actuation of Dirichlet or Neumann type. For output feedback, the collocated boundary measurement on $\Gam_c$ is available: the flux if the actuation is Dirichlet, the trace if it is Neumann. The objective is exponential stabilization of $u\equiv 0$.

\subsection{Target domains and compatibility}

\begin{definition}[Target-domain configuration]\label{def:target}
A \emph{target-domain configuration} for the plant \eqref{eq:plant} consists of
(i) a bounded Lipschitz target domain $\Dom$ containing $\Om$;
(ii) a partition $\partial \Dom = \Sigma_u \cup \Sigma_c$, with a Dirichlet or Neumann condition assigned to each boundary portion and, for output feedback, a measured set $\Sigma_m \subseteq \partial \Dom$;
(iii) a feedback law $\F$ for the dynamics \eqref{eq:plant} on $\Dom$ and, when needed, an observer. The resulting target closed loop is assumed to have a unique weak solution and to satisfy
\begin{equation}\label{eq:target-decay}
\|v(t)\|_{L^2(\Dom)} \le C_\Dom\, e^{-ct}\, \|v(0)\|_{L^2(\Dom)},\qquad t\ge 0.
\end{equation}
\end{definition}

Definition~\ref{def:target} is independent of backstepping. Any known design with the stated well-posedness and decay properties can be used. Backstepping supplies the closed-form configurations used here. The ball with the Dirichlet design of Section~\ref{sec:ball-recap} has $\Sigma_u=\emptyset$ and $\Sigma_c=\Sigma_m=\partial\Ball$. The same ball with Proposition~\ref{prop:neumann} gives a different configuration. A rectangle with one actuated side and homogeneous walls gives another one.

\begin{assumption}[Compatibility]\label{ass:compat}
The plant \eqref{eq:plant} and a target-domain configuration are \emph{compatible} if:
(i) $\overline\Om\subset\overline\Dom$, the extension region $\Omext=\Dom\setminus\overline\Om$ is a finite union of bounded Lipschitz domains, and $\Gam:=\partial\Om\cap\Dom$ is a relatively open Lipschitz interface; junctions between $\Gam$ and $\partial\Dom$ have surface measure zero;
(ii) the shared part $\Gam_u:=\partial\Om\cap\partial\Dom$ lies in $\Sigma_u$ and carries the same Dirichlet or Neumann condition in the plant and target configuration;
(iii) the extended part is exactly the actuated boundary, $\Gam_c=\Gam$, and the complementary collocated measurement is available there;
(iv) for output feedback, measurements required on $\Sigma_m\cap\Gam_u$ are supplied by physical sensors, while those on $\Sigma_m\setminus\Gam_u$ are computed from the virtual state as described in Section~\ref{sec:observer}.
\end{assumption}

The set $\Omext$ is the \emph{extension region}. It need not be connected: a hole in $\Om$ produces an enclosed component of $\Omext$. We write $n_\Om$ and $n_{\Omext}$ for the outward unit normals, defined almost everywhere on the Lipschitz boundaries, so $n_{\Omext}=-n_\Om$ almost everywhere on $\Gam$. For ball configurations we take $R>\max_{x\in\overline\Om}|x|$, hence $\Gam\Subset\Ball$. In shared-wall configurations, $\overline\Gam$ meets $\partial\Dom$ at the junction set allowed in Assumption~\ref{ass:compat}(i). Star-shaped domains are convenient for plotting and polar grids, but the theory does not use that property.

\begin{figure*}[t]
\centering
\includegraphics[width=0.95\textwidth]{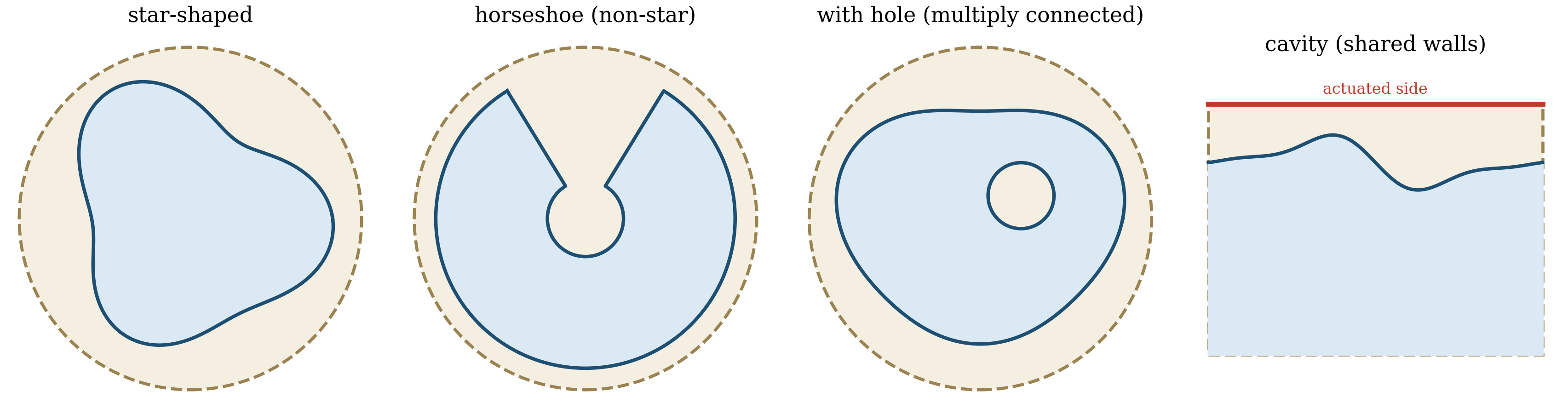}
\caption{The four physical domains $\Om$ (shaded) used in this paper, embedded in their target domains (dashed). The theory requires only that $\Om$ be Lipschitz and compatible with the target: convexity, star-shapedness and simple connectedness are nowhere assumed, as the horseshoe and the multiply connected example illustrate. In the cavity (rightmost), the straight walls are \emph{shared} with the rectangular target and only the top side of the rectangle is actuated.}
\label{fig:star-embedding}
\end{figure*}

\subsection{The coupled system}

Let $z$ denote the virtual state on $\Omext$. States on $\Om$ and $\Omext$ are concatenated by the operator $\concat: L^2(\Om)\times L^2(\Omext) \to L^2(\Dom)$,
\begin{equation}\label{eq:concatenated-state}
\concat(u,z)(x):=
\begin{cases}
u(x), & x\in \Om,\\
z(x), & x\in \Omext,
\end{cases}
\end{equation}
and we write $v=\concat(u,z)$ for the \emph{concatenated state}. The coupled real/virtual closed loop is, for $t>0$,
\begin{align}
u_t &= \eps \Delta u+\lambda u, && x\in \Om, \label{eq:u-eq}\\
z_t &= \eps \Delta z+\lambda z, && x\in \Omext, \label{eq:z-eq}\\
u &= z, && x\in \Gam, \label{eq:interface-dirichlet}\\
\partial_{n_\Om}u &= \partial_{n_\Om}z, && x\in \Gam, \label{eq:interface-flux}\\
z &= \F[\concat(u,z)], && x\in \Sigma_c, \label{eq:outer-feedback}
\end{align}
together with the homogeneous conditions on $\Gam_u$ (for $u$) and on $\Sigma_u\setminus\Gam_u$ (for $z$), and initial conditions $u(0,\cdot)=u_0\in L^2(\Om)$, $z(0,\cdot)=z_0\in L^2(\Omext)$. In \eqref{eq:outer-feedback}, $\F$ is the target design's feedback, written for Dirichlet actuation of the target; for the Neumann law of Proposition~\ref{prop:neumann} the condition reads $\partial_n z = \F^N[\concat(u,z)]$ on $\Sigma_c$. The interface conditions \eqref{eq:interface-dirichlet}--\eqref{eq:interface-flux} state that trace and flux are each continuous across $\Gam$: the same value, the same flux, seen from both sides.

\emph{The controller is dynamic, and the plant's boundary conditions fix the roles at the interface.} System \eqref{eq:u-eq}--\eqref{eq:outer-feedback} is a feedback interconnection of the plant with a dynamic boundary controller whose internal state is $z$: the controller simulates \eqref{eq:z-eq} in real time, fed on $\Sigma_c$ by the target design. The two interface conditions split into an actuation channel (imposed by the controller on the plant) and a sensing channel (fed by the plant to the controller), and the plant's actuation type decides the split:
\begin{itemize}
\item \emph{Dirichlet-actuated plant:} the plant receives $u|_\Gam = z|_\Gam$; the controller receives the measured flux $\partial_{n_\Om}u|_\Gam$ as Neumann data for $z$ on $\Gam$.
\item \emph{Neumann-actuated plant:} the plant receives $\partial_{n_\Om}u|_\Gam = \partial_{n_\Om}z|_\Gam$; the controller receives the measured trace $u|_\Gam$ as Dirichlet data for $z$ on $\Gam$.
\end{itemize}
Both assignments enforce \eqref{eq:interface-dirichlet}--\eqref{eq:interface-flux}, and both are driven by the same target design on $\Sigma_c$: the physical actuation type is decoupled from the actuation type of the target design. A Robin extension can be formulated by imposing one linear combination of trace and flux and measuring an independent combination. Its treatment requires the corresponding mixed-boundary weak formulation, so the results below are stated for Dirichlet and Neumann actuation. In both cases, the controller actuates and senses on all of $\Gam$.

\emph{Target-domain interpretation.} Classical backstepping transforms $u_t = \eps\Delta u + \lambda u$ into $w_t = \eps\Delta w - cw$ while preserving the spatial domain. The coupled system \eqref{eq:u-eq}--\eqref{eq:outer-feedback} preserves the plant dynamics while transforming the spatial domain from $\Om$ to~$\Dom$. The extension region carries the exact plant dynamics; nothing is approximated or penalized at the interface, whose only role is to make the concatenated state a genuine solution on the target domain. As the next section makes precise, the physical state is, for positive times, the restriction of that target closed loop, so $\Om$ evolves exactly as if it were a sub-region of the controlled target. The target geometry thus enters the plant's dynamics through the interface data; the plant is, in effect, controlled as a piece of a ball or a rectangle.

\begin{figure*}[t]
\centering
\includegraphics[width=0.92\textwidth]{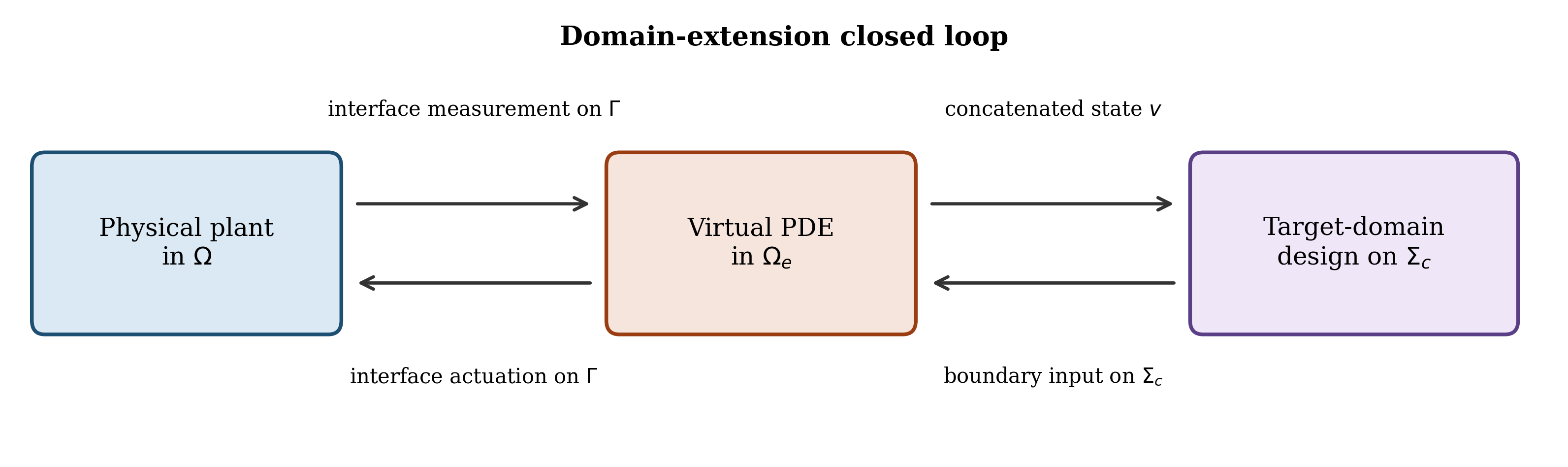}
\caption{Closed-loop information flow, drawn for a Dirichlet-actuated plant. The plant exports its interface measurement to the virtual domain, the virtual state returns the interface actuation, and the target design acts on the target boundary. For a Neumann-actuated plant the two interface arrows swap roles.}
\label{fig:workflow}
\end{figure*}

\subsection{Weak solutions}\label{sec:weak}

We use the standard trace operator on bounded Lipschitz domains; see, for example,~\cite[Ch.~18]{leoni}. The following elementary gluing fact is the only Sobolev result specific to the interface.

\begin{lemma}[Sobolev gluing]\label{lem:gluing}
Let Assumption~\ref{ass:compat}(i) hold. For $u\in H^1(\Om)$ and $z\in H^1(\Omext)$, the concatenation $\concat(u,z)$ belongs to $H^1(\Dom)$ if and only if the traces of $u$ and $z$ agree almost everywhere on $\Gam$. In that case,
\begin{equation}\label{eq:gluing-norm}
\|\concat(u,z)\|_{H^1(\Dom)}^2
=\|u\|_{H^1(\Om)}^2+\|z\|_{H^1(\Omext)}^2.
\end{equation}
\end{lemma}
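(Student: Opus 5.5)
The plan is to characterize $H^1(\Dom)$ membership of $v=\concat(u,z)$ through its distributional gradient. By Assumption~\ref{ass:compat}(i), $\Dom=\Om\cup\Omext\cup\Gam$ up to a null set. The set $\partial\Om\cap\partial\Omext\cap\Dom$ is exactly $\Gam$, while $\overline\Om\cap\partial\Dom$ lies on the boundary of $\Dom$ and plays no role for test functions in $C_c^\infty(\Dom)$. The candidate weak gradient is the piecewise gradient $g:=\concat(\nabla u,\nabla z)\in L^2(\Dom)^n$. Once $g$ is shown to be the weak gradient, the norm identity \eqref{eq:gluing-norm} is immediate: the $L^2$ norms of $v$ and of $g$ split additively over the disjoint sets $\Om$ and $\Omext$, since $\Gam$ and the junction set have Lebesgue measure zero (Lipschitz boundaries are null sets).

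\emph{Sufficiency.} Fix $\varphi\in C_c^\infty(\Dom)^n$. Apply the Gauss--Green formula for $H^1$ functions on the bounded Lipschitz domains $\Om$ and on each component of $\Omext$, with the traces from \cite[Ch.~18]{leoni}:
\[
\int_\Dom v\,\mathrm{div}\,\varphi\,dx=-\int_\Dom g\cdot\varphi\,dx+\int_{\partial\Om}\mathrm{tr}\,u\,\varphi\cdot n_\Om\,dS+\int_{\partial\Omext}\mathrm{tr}\,z\,\varphi\cdot n_{\Omext}\,dS .
\]
Since $\varphi$ vanishes near $\partial\Dom$, the boundary integrals reduce to integrals over $\Gam$; the junctions carry zero surface measure and are discarded. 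On $\Gam$ we have $n_{\Omext}=-n_\Om$ a.e., so the two boundary terms combine into $\int_\Gam(\mathrm{tr}\,u-\mathrm{tr}\,z)\,\varphi\cdot n_\Om\,dS$, which vanishes when the traces agree. Hence $\nabla v=g$ weakly, and $v\in H^1(\Dom)$.

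\emph{Necessity.} Suppose $v\in H^1(\Dom)$. Its restrictions satisfy $v|_\Om=u$ and $v|_{\Omext}=z$, and the weak gradient of $v$ restricts to $\nabla u$ and $\nabla z$, so the weak gradient of $v$ is $g$. Subtracting this from the identity above leaves $\int_\Gam(\mathrm{tr}\,u-\mathrm{tr}\,z)\,\varphi\cdot n_\Om\,dS=0$ for all such $\varphi$. The functions $\varphi\cdot n_\Om|_\Gam$ are rich enough to conclude: localize in a Lipschitz chart near a point of the relatively open $\Gam$, where $n_\Om\cdot e>\delta>0$ for a fixed direction $e$ on a positive-measure piece, and take $\varphi=\psi e$ with $\psi\in C_c^\infty$ arbitrary. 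The products $\psi\,(n_\Om\cdot e)$ then separate $L^1$ functions on that piece, so the trace difference vanishes there, and covering $\Gam$ gives equality a.e.

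The main technical point is this localization in the necessity direction. The normal is only $L^\infty$, so one works chart by chart, using that in a Lipschitz graph chart $n_\Om\cdot e_n$ is bounded below by $(1+L^2)^{-1/2}$ for the graph direction $e_n$. The sign of this lower bound is fixed by the side on which $\Om$ lies. Everything else is the standard Green formula on Lipschitz domains.
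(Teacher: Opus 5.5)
Your proof is correct. The sufficiency half matches the paper: Green's formula on $\Om$ and on each component of $\Omext$, cancellation of the interface terms when the traces agree, and the piecewise gradient as the weak gradient of $\concat(u,z)$, which gives \eqref{eq:gluing-norm}.

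For necessity you take a different route. The paper's argument is that for $v\in H^1(\Dom)$ the two one-sided traces on $\Gam$ are both ``the trace of the same function.'' This quietly relies on approximating $v$ near $\Gam$ by smooth functions, which interior mollification allows because $\Gam\subset\Dom$. It also uses continuity of the trace maps on $H^1(\Om)$ and $H^1(\Omext)$.

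You instead reuse the Green identity from the sufficiency step, which holds without any trace matching. You read off that $\int_\Gam(\mathrm{tr}\,u-\mathrm{tr}\,z)\,\varphi\cdot n_\Om\,dS=0$ for all test fields, and then localize.

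What your version buys is transparency of the ``if and only if.'' The distributional gradient of $\concat(u,z)$ is the piecewise gradient plus a surface term carried by the trace jump on $\Gam$, so $H^1$ membership is exactly the vanishing of that term, and one identity serves both directions. The cost is the chart-by-chart localization. The paper's version is shorter but leaves the density argument implicit.

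Your localization is also simpler than you make it. If $\Om$ lies locally below the graph $x_n=\gamma(x')$, then $(n_\Om\cdot e_n)\,dS=dx'$ exactly. With $\varphi=\psi e_n$ the identity becomes $\int(\mathrm{tr}\,u-\mathrm{tr}\,z)(x',\gamma(x'))\,\psi(x',\gamma(x'))\,dx'=0$. Choosing $\psi(x',x_n)=\chi(x')\eta(x_n)$, with $\eta\equiv 1$ near the graph inside a chart cylinder contained in $\Dom$, reduces this to the fundamental lemma in $x'$. No lower bound on $n_\Om\cdot e_n$ is needed.
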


\begin{proof}
Assume first that the traces agree. For any $\psi\in C_c^\infty(\Dom)$, integration by parts on $\Om$ and on each component of $\Omext$ shows that the interface terms in the distributional derivative of $\concat(u,z)$ cancel. Its weak gradient is therefore the concatenation of $\nabla u$ and $\nabla z$, which belongs to $L^2(\Dom)^n$ and gives \eqref{eq:gluing-norm}. The converse follows by restricting an $H^1(\Dom)$ function to the two Lipschitz subdomains: both one-sided traces are the trace of the same function on $\Gam$.
\end{proof}

Let $\Sigma_D$ be the union of the portions of $\partial\Dom$ where the target-domain configuration assigns a Dirichlet condition, controlled or homogeneous, and set
\begin{equation}\label{eq:test-space}
V_D:=\{\varphi\in H^1(\Dom):\ \varphi|_{\Sigma_D}=0\}.
\end{equation}
This choice leaves test functions free on homogeneous Neumann portions. To see the interface condition, first take smooth $(u,z)$ and $\varphi\in V_D$. Integration by parts on the two subdomains gives
\begin{align}
\int_{\Om} (u_t-\lambda u)\varphi\,dx
&= -\eps\!\int_{\Om} \nabla u\cdot\nabla\varphi\,dx
+\eps\!\int_{\Gam} (\partial_{n_\Om}u)\,\varphi\,dS, \label{eq:ibp-u}\\
\int_{\Omext} (z_t-\lambda z)\varphi\,dx
&= -\eps\!\int_{\Omext} \nabla z\cdot\nabla\varphi\,dx
-\eps\!\int_{\Gam} (\partial_{n_\Om}z)\,\varphi\,dS, \label{eq:ibp-z}
\end{align}
where homogeneous Neumann terms vanish, Dirichlet terms vanish through the test function, and the sign in \eqref{eq:ibp-z} uses $n_{\Omext}=-n_\Om$ on $\Gam$. Adding the identities leaves the interface term
\begin{equation}\label{eq:interface-cancel}
\eps\int_\Gam \bigl(\partial_{n_\Om}u-\partial_{n_\Om}z\bigr)\,\varphi\,dS,
\end{equation}
which vanishes under \eqref{eq:interface-flux}. Thus trace matching places the concatenated state in $H^1(\Dom)$, while the variational identity encodes flux matching without requiring a pointwise trace of the gradient.

\begin{definition}\label{def:weak}
A pair $(u,z)$ with
\begin{align*}
u &\in C([0,\infty);L^2(\Om))\cap L^2_{\mathrm{loc}}((0,\infty);H^1(\Om)),\\
z &\in C([0,\infty);L^2(\Omext))\cap L^2_{\mathrm{loc}}((0,\infty);H^1(\Omext)),
\end{align*}
is a \emph{weak solution} of the displayed Dirichlet-target system \eqref{eq:u-eq}--\eqref{eq:outer-feedback} with data $(u_0,z_0)$ if $u(0)=u_0$, $z(0)=z_0$, and for almost every $t>0$:
(i) the traces from both sides coincide on $\Gam$, $u(t)|_\Gam = z(t)|_\Gam$;
(ii) the Dirichlet conditions on $\partial\Dom$ hold in the trace sense, including $z|_{\Sigma_c} = \F[\concat(u,z)]$;
(iii) for every $\varphi\in V_D$,
\begin{multline}\label{eq:broken-weak}
\frac{d}{dt}\Bigl[\int_{\Om} u\varphi\,dx + \int_{\Omext} z\varphi\,dx\Bigr]
= -\eps\Bigl[\int_{\Om} \nabla u\cdot\nabla\varphi\,dx \\ + \int_{\Omext} \nabla z\cdot\nabla\varphi\,dx\Bigr]
+ \lambda\Bigl[\int_{\Om} u\varphi\,dx + \int_{\Omext} z\varphi\,dx\Bigr]
\end{multline}
in the sense of distributions on $(0,\infty)$.
\end{definition}

This is the usual variational formulation for a linear parabolic equation~\cite[Sec.~7.1]{evans}, written as two integrals. A weak solution of the target closed loop is defined by the same identity with one integral over $\Dom$. If the target actuation is Neumann, $\Sigma_c$ is excluded from $\Sigma_D$ and the right-hand side of \eqref{eq:broken-weak} contains the boundary term
\begin{equation}\label{eq:neumann-weak-term}
\eps\,\bigl\langle \F^N[\concat(u,z)],\varphi|_{\Sigma_c}\bigr\rangle,
\end{equation}
with the usual boundary duality. Homogeneous Neumann portions contribute zero. This convention covers the Dirichlet and Neumann configurations used below.

\section{Equivalence, Well-Posedness, and Stability Transfer}\label{sec:reduction}

\begin{proposition}[Equivalence with the target closed loop]\label{prop:equivalence}
Let the plant and a target-domain configuration be compatible. A pair $(u,z)$ is a weak solution of the coupled system with data $(u_0,z_0)$ if and only if $v=\concat(u,z)$ is a weak solution of the target closed loop on $\Dom$ with data $v_0=\concat(u_0,z_0)$.
\end{proposition}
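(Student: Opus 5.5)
The plan is to compare the two weak formulations term by term, using Lemma~\ref{lem:gluing} to translate between regularity on the pieces and regularity on $\Dom$. Throughout, $v=\concat(u,z)$ and, conversely, $u=v|_\Om$ and $z=v|_{\Omext}$. The map $\concat$ is an isometric isomorphism $L^2(\Om)\times L^2(\Omext)\to L^2(\Dom)$. This holds because $\Gam$ and the junction set have Lebesgue measure zero: $\Dom=\Om\cup\Omext\cup\Gam$ up to a null set by Assumption~\ref{ass:compat}(i). Hence $u\in C([0,\infty);L^2(\Om))$ and $z\in C([0,\infty);L^2(\Omext))$ if and only if $v\in C([0,\infty);L^2(\Dom))$. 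The initial data correspond, $v(0)=\concat(u_0,z_0)$.

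\emph{Direction ($\Rightarrow$).} Let $(u,z)$ be a weak solution of the coupled system. For almost every $t$ we have $u(t)\in H^1(\Om)$, $z(t)\in H^1(\Omext)$, and by Definition~\ref{def:weak}(i) the traces agree on $\Gam$. Lemma~\ref{lem:gluing} then gives $v(t)\in H^1(\Dom)$, with \eqref{eq:gluing-norm} yielding $v\in L^2_{\mathrm{loc}}((0,\infty);H^1(\Dom))$. The weak gradient of $v$ is the concatenation of $\nabla u$ and $\nabla z$. The two integrals in each bracket of \eqref{eq:broken-weak} therefore combine into single integrals over $\Dom$, and \eqref{eq:broken-weak} becomes exactly the target variational identity for every $\varphi\in V_D$.

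The boundary conditions must also transfer. Since $\partial\Dom\setminus\Gam_u$ lies in $\overline{\Omext}$, the trace of $v$ on $\Sigma_c$ and on $\Sigma_u\setminus\Gam_u$ is the trace of $z$. On $\Gam_u$, the trace of $v$ is the trace of $u$; this portion is shared and carries the same condition in plant and target by Assumption~\ref{ass:compat}(ii). Up to the measure-zero junctions, the Dirichlet conditions on $\Sigma_D$, including $v|_{\Sigma_c}=\F[v]$, are thus those in Definition~\ref{def:weak}(ii). In the Neumann-target case, the term \eqref{eq:neumann-weak-term} is literally the same expression in $v$ in both formulations. Homogeneous Neumann portions are encoded only through the freedom of the test function in $V_D$, which is common to both formulations.

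\emph{Direction ($\Leftarrow$).} Let $v$ be a target weak solution. Restriction gives $u$ and $z$ with the required regularity. The converse part of Lemma~\ref{lem:gluing} gives equality of traces on $\Gam$, and the same splitting of integrals recovers \eqref{eq:broken-weak}. The boundary conditions are recovered as in the forward direction.

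The main obstacle is to check that no information is lost at the interface. Flux matching \eqref{eq:interface-flux} is not part of either definition; it is encoded variationally, as shown by \eqref{eq:interface-cancel}, and test functions in $V_D$ are unconstrained on $\Gam$. This is why both formulations test against the same space $V_D$ and the same identity. The remaining delicate points are the trace bookkeeping near the junctions of $\overline\Gam$ with $\partial\Dom$ and the identification $(\partial\Dom)\cap\overline{\Om}=\Gam_u$. Both reduce to Assumption~\ref{ass:compat}(i)--(ii) and the surface-measure-zero hypothesis, because traces are only defined almost everywhere.
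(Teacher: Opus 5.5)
Your proposal is correct and follows the paper's proof. In the forward direction the gluing lemma gives $v(t)\in H^1(\Dom)$; in the converse, restriction and the coincidence of the one-sided traces of an $H^1(\Dom)$ function give condition (i); and in both directions the broken identity and the target identity coincide because $\Gam$ is Lebesgue-null and the test space $V_D$ is the same. Your explicit tracking of traces on $\Gam_u$ versus $\partial\Dom\setminus\Gam_u$, using $\partial\Dom\cap\overline\Om=\Gam_u$ and the null junction set, spells out what the paper compresses into ``the boundary conditions follow from item (ii).''
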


\begin{proof}
($\Rightarrow$) Lemma~\ref{lem:gluing} gives $v(t)\in H^1(\Dom)$ for almost every $t>0$. The broken identity \eqref{eq:broken-weak} is then the target-domain variational identity, since its two volume integrals are the corresponding integrals over $\Dom$. The boundary conditions follow from item (ii). The same argument includes \eqref{eq:neumann-weak-term} for a Neumann target configuration.

($\Leftarrow$) Let $v$ be a weak solution on $\Dom$ and set $(u,z)=(v|_\Om, v|_{\Omext})$. Restriction preserves the regularity classes, the two one-sided traces of $v\in H^1(\Dom)$ on $\Gam$ coincide a.e., giving (i); (ii) holds by hypothesis; and \eqref{eq:broken-weak} is the weak form on $\Dom$ with the integral split over $\Om\cup\Omext$ ($\Gam$ has measure zero).
\end{proof}

\subsection{Well-posedness by restriction}

\begin{proposition}[Well-posedness]\label{prop:wellposed}
Under Assumption~\ref{ass:compat}, for every $u_0\in L^2(\Om)$ and $z_0\in L^2(\Omext)$, the coupled system \eqref{eq:u-eq}--\eqref{eq:outer-feedback} has a unique weak solution: the restriction $(u,z)=(v|_\Om,v|_{\Omext})$ of the target closed-loop solution from $v_0=\concat(u_0,z_0)$. For every $\tau>0$, $v$ is smooth on compact subsets of $\Dom\times[\tau,\infty)$. Hence the trace condition holds almost everywhere on $\Gam$, and the flux condition holds almost everywhere wherever the unit normal is defined. On a $C^1$ portion of the interface compactly contained in $\Dom$, both conditions hold classically for positive time.
\end{proposition}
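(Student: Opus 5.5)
The proof reduces everything to the target closed loop via Proposition~\ref{prop:equivalence}. First, existence: by Definition~\ref{def:target}(iii), the target closed loop on $\Dom$ has a unique weak solution $v$ from any $v_0\in L^2(\Dom)$. Apply this with $v_0=\concat(u_0,z_0)$, which lies in $L^2(\Dom)$ because $\Gam$ has measure zero and the norms add. The ($\Leftarrow$) direction of Proposition~\ref{prop:equivalence} then shows that $(u,z)=(v|_\Om,v|_{\Omext})$ is a weak solution of the coupled system with data $(u_0,z_0)$. Uniqueness comes from the ($\Rightarrow$) direction. If $(u',z')$ is another weak solution, then $\concat(u',z')$ is a weak solution of the target closed loop with the same data $v_0$. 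By uniqueness on $\Dom$ it equals $v$, and restricting recovers $(u',z')=(u,z)$ almost everywhere. This part is purely formal.

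Next, interior regularity. On $\Dom$ the function $v$ satisfies $v_t=\eps\Delta v+\lambda v$ in the distributional sense: test \eqref{eq:broken-weak} with $\varphi\in C_c^\infty(\Dom)\subset V_D$, recombined as a single integral over $\Dom$. The coefficients are constant, and $v\in L^2_{\mathrm{loc}}((0,\infty);H^1(\Dom))$. Interior parabolic regularity, for instance by hypoellipticity of the heat operator $\partial_t-\eps\Delta-\lambda$ or by iterated interior energy estimates as in \cite[Sec.~7.1]{evans}, therefore gives $v\in C^\infty$ on compact subsets of $\Dom\times(0,\infty)$, in particular on compact subsets of $\Dom\times[\tau,\infty)$. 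The boundary data on $\Sigma_c$ produced by $\F$ play no role here, because only test functions supported away from $\partial\Dom$ are used. For the radial-coefficient variant one would use analyticity or smoothness of $\lambda(r)$ instead, but the statement concerns the constant-coefficient plant.

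Finally, the interface conditions follow. Since $v(t)\in H^1(\Dom)$ for a.e.\ $t$, Lemma~\ref{lem:gluing} gives equality of the traces almost everywhere on $\Gam$. For the flux, take a point of $\Gam$ where the Lipschitz boundary has a unit normal. Such points are interior to $\Dom$ apart from the measure-zero junction set of Assumption~\ref{ass:compat}(i), so $v(t)$ is smooth in a neighborhood there. Both one-sided normal derivatives $\partial_{n_\Om}u$ and $\partial_{n_\Om}z$ are then the restriction of the single continuous field $\nabla v\cdot n_\Om$, so they agree. On a $C^1$ portion of $\Gam$ compactly contained in $\Dom$, the normal is continuous and $v$ is $C^\infty$ up to that portion from both sides, so both conditions hold pointwise, i.e.\ classically, for every $t>0$.

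The main obstacle is the regularity step. One has to justify carefully that the weak target solution is a distributional solution of the heat equation in $\Dom\times(0,\infty)$, despite nonlocal boundary feedback, and that interior smoothness is uniform on sets $K\times[\tau,T]$. I would handle this by localizing with a cutoff in space and time, which removes the boundary term, and then invoking standard interior estimates. The flux statement near junctions of $\overline\Gam$ with $\partial\Dom$ is the other delicate point, and I would simply exclude that null set.
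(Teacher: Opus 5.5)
Your proposal is correct and follows the paper's proof: existence and uniqueness come from Proposition~\ref{prop:equivalence} together with the target well-posedness of Definition~\ref{def:target}, and the interface conditions come from restricting one smooth function to both sides. Interior smoothness comes from $v$ solving $v_t=\eps\Delta v+\lambda v$ in distributions on $\Dom\times(0,\infty)$; the paper removes the reaction term via $\tilde v=e^{-\lambda t}v$ and invokes heat-equation smoothing, while you appeal to hypoellipticity of $\partial_t-\eps\Delta-\lambda$ directly, which is the same idea (and since $\Gam=\partial\Om\cap\Dom$ lies in the open set $\Dom$, every point of $\Gam$ is interior, so your exclusion of the junction set is harmless but unnecessary).
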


\begin{proof}
Existence and uniqueness follow from Proposition~\ref{prop:equivalence} and Definition~\ref{def:target}. On any open set compactly contained in $\Dom$, the concatenated state satisfies $v_t-\eps\Delta v=\lambda v$ in distributions. The substitution $\tilde v=e^{-\lambda t}v$ removes the reaction term, so $\tilde v$ solves the heat equation $\tilde v_t=\eps\Delta\tilde v$; the heat equation smooths its data, hence $\tilde v$, and with it $v$, is smooth on compact subsets of $\Dom\times(0,\infty)$~\cite[Sec.~2.3.3]{evans}. The interface statements follow by restricting this single smooth function to the two sides; for a Lipschitz interface the normal is understood almost everywhere.
\end{proof}

For the ball designs of Section~\ref{sec:toolbox}, the well-posedness required by Definition~\ref{def:target}(iii) comes from the transformation: $v=(I+\Lop)w$ with $w$ the standard solution of the target system from $w_0=(I-\Kop)v_0$, and the transformations are bounded on $L^2$ and $H^1$~\cite[Prop.~1]{nball}, so $v$ inherits the regularity classes of Definition~\ref{def:weak} and the decay \eqref{eq:target-decay} with $C_\Dom=C_{\mathcal T}$. For a rectangular target $[0,L_x]\times[0,L_y]$, actuated on the side $y=L_y$, the state expands in sine modes, $v=\sum_{k\ge1} v_k(t,y)\sin(k\pi x/L_x)$, and each $v_k$ satisfies a one-dimensional reaction-diffusion equation with reaction $\lambda_k=\lambda-\eps(k\pi/L_x)^2$. Modes with $\lambda_k+c\le 0$ receive zero control and decay at rate $c$ or better with zero boundary data; each remaining mode is mapped to a damped one-dimensional target by the classical transformation with kernel $-\mu_k \eta\, I_1(\sqrt{\mu_k(y^2-\eta^2)})/\sqrt{\mu_k(y^2-\eta^2)}$, $\mu_k=(\lambda_k+c)/\eps$~\cite{krstic}. The kernel norms increase with $\mu_k$, and $\mu_k\le\mu_1$, so the per-mode transformation bounds are uniform in $k$, and summing the modal estimates by Parseval gives \eqref{eq:target-decay} and the regularity classes, exactly as on the ball.

The virtual initial state $z_0$ is the controller initialization. For the $L^2$ state-feedback result it can be chosen freely; $z_0=0$ is admissible even when the concatenated initial state jumps across $\Gam$. The weak target solution smooths for positive time, after which the interface conditions hold in the sense stated in Proposition~\ref{prop:wellposed}. Output feedback will use a compatible $H^1$ initialization instead, built from the known interface trace.

Restriction is an analysis device. In implementation the physical plant is not solved by the controller. Section~\ref{sec:numerics} therefore uses separate plant and controller solvers, coupled only through the interface data.

\subsection{Stability transfer}

\begin{theorem}[Stability transfer]\label{thm:main}
Let the plant \eqref{eq:plant} and a target-domain configuration be compatible. For every $u_0\in L^2(\Om)$ and $z_0 \in L^2(\Omext)$, the physical state of the coupled system \eqref{eq:u-eq}--\eqref{eq:outer-feedback} satisfies, for all $t\ge0$,
\begin{equation}
\|u(t,\cdot)\|_{L^2(\Om)} \leq C_\Dom\, e^{-ct}\bigl(\|u_0\|_{L^2(\Om)}^2+\|z_0\|_{L^2(\Omext)}^2\bigr)^{1/2}.
\label{eq:physical-stability}
\end{equation}
In particular, with the controller initialization $z_0=0$,
\begin{equation}
\|u(t,\cdot)\|_{L^2(\Om)} \leq C_\Dom\, e^{-ct}\|u_0\|_{L^2(\Om)}.
\label{eq:physical-stability-zero}
\end{equation}
\end{theorem}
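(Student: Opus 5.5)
The plan is to reduce everything to the target-domain estimate \eqref{eq:target-decay} through the equivalence of Proposition~\ref{prop:equivalence} and the well-posedness statement of Proposition~\ref{prop:wellposed}, and then to restrict. Fix $u_0\in L^2(\Om)$ and $z_0\in L^2(\Omext)$, and set $v_0=\concat(u_0,z_0)\in L^2(\Dom)$. By Proposition~\ref{prop:wellposed}, the unique weak solution of the coupled system is $(u,z)=(v|_\Om,v|_{\Omext})$. Here $v$ is the weak solution of the target closed loop from $v_0$, whose existence and uniqueness are part of Definition~\ref{def:target}. In particular $v(t)=\concat(u(t),z(t))$ for every $t\ge0$, and this identity holds pointwise in time because all three functions lie in $C([0,\infty);L^2)$.

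Next I would use the fact that $\concat$ is an isometry from $L^2(\Om)\times L^2(\Omext)$ onto $L^2(\Dom)$. This holds because $\Om$ and $\Omext$ are disjoint and $\Dom\setminus(\Om\cup\Omext)=\Dom\cap\partial\Om=\Gam$ has Lebesgue measure zero, since $\Gam$ is a Lipschitz hypersurface by Assumption~\ref{ass:compat}(i). Splitting the integral therefore gives
\begin{equation*}
\|\concat(u,z)\|_{L^2(\Dom)}^2=\|u\|_{L^2(\Om)}^2+\|z\|_{L^2(\Omext)}^2 .
\end{equation*}
Applying this identity at time $0$ yields $\|v_0\|_{L^2(\Dom)}=(\|u_0\|^2+\|z_0\|^2)^{1/2}$. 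Applying it at time $t$ yields $\|u(t)\|_{L^2(\Om)}\le\|v(t)\|_{L^2(\Dom)}$.

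Chaining these two facts with \eqref{eq:target-decay} gives
\begin{equation*}
\|u(t)\|_{L^2(\Om)}\le\|v(t)\|_{L^2(\Dom)}\le C_\Dom e^{-ct}\|v_0\|_{L^2(\Dom)}=C_\Dom e^{-ct}\bigl(\|u_0\|^2_{L^2(\Om)}+\|z_0\|^2_{L^2(\Omext)}\bigr)^{1/2},
\end{equation*}
which is \eqref{eq:physical-stability}. Setting $z_0=0$, which is admissible in $L^2$ as noted after Proposition~\ref{prop:wellposed}, gives \eqref{eq:physical-stability-zero}. The same chain also bounds the virtual state $z(t)$ by the same quantity, which may be worth recording.

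No step here is a genuine obstacle. The only point that needs care is that the solution realizing the estimate is the same object as the coupled-system solution. That identification is supplied by Proposition~\ref{prop:equivalence}, together with uniqueness on the target, and the measure-zero property of $\Gam$. The constants $C_\Dom$ and $c$ are those of the target configuration, so they do not depend on the shape of $\Om$.
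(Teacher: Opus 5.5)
Your proposal is correct and matches the paper's proof. Both identify $u=v|_\Om$ through Proposition~\ref{prop:wellposed}, note that $\|v_0\|_{L^2(\Dom)}^2=\|u_0\|_{L^2(\Om)}^2+\|z_0\|_{L^2(\Omext)}^2$ and that restriction has norm one, and then apply \eqref{eq:target-decay}; your explicit check that $\Gam$ has measure zero, which makes $\concat$ an isometry, simply spells out a step the paper leaves implicit.
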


\begin{proof}
By Proposition~\ref{prop:wellposed}, $u=v|_\Om$ with $v$ the target solution from $v_0=\concat(u_0,z_0)$, and $\|v_0\|_{L^2(\Dom)}^2=\|u_0\|_{L^2(\Om)}^2+\|z_0\|_{L^2(\Omext)}^2$. Then
$\|u(t)\|_{L^2(\Om)}\le\|v(t)\|_{L^2(\Dom)}\le C_\Dom e^{-ct}\|v_0\|_{L^2(\Dom)}$ by \eqref{eq:target-decay}.
\end{proof}

Restriction has norm one in $L^2$, so the decay rate and prefactor in \eqref{eq:physical-stability} are those of the target closed loop. With $z_0=0$, the initial norm also reduces to the physical norm. Hence, for a fixed target-domain configuration, $c$ and $C_\Dom$ are independent of the shape of the compatible physical domain embedded in $\Dom$. They are geometry-independent with respect to $\Om$, though changing the target domain or its design can change both constants.

The next corollary quantifies the physical actuation and sensing signals; it answers the natural implementation question of whether the boundary control generated through the virtual domain remains bounded.

\begin{corollary}[Decay of the interface signals]\label{cor:actuation}
Let $\Gam\Subset\Dom$. For every $\delta>0$ there exists $C_\delta>0$, depending on $\delta$, $\Gam$, and the target data, such that the solution of Proposition~\ref{prop:wellposed} satisfies, for all $t\ge\delta$,
\begin{equation}
\bigl\|u(t)|_\Gam\bigr\|_{L^2(\Gam)} + \bigl\|\partial_{n_\Om} u(t)|_\Gam\bigr\|_{L^2(\Gam)}
\le C_\delta\, e^{-ct}\,\|v_0\|_{L^2(\Dom)}.
\end{equation}
In shared-wall configurations the same estimate holds on every compact subset of $\Gam$ away from the junction points.
\end{corollary}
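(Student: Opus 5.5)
The approach is to apply interior parabolic regularity to the single function $v=\concat(u,z)$ on a neighborhood of $\Gam$, and then combine it with the semigroup-type decay \eqref{eq:target-decay}. Since $\Gam\Subset\Dom$ and $\Gam$ is compact (its closure lies in $\Dom$), choose an open set $W$ with $\overline\Gam\subset W\Subset W'\Subset\Dom$, for instance a tubular neighborhood of distance $d/3$ and $2d/3$ with $d=\operatorname{dist}(\Gam,\partial\Dom)>0$. By Proposition~\ref{prop:wellposed}, $v$ solves $v_t=\eps\Delta v+\lambda v$ in the distributional sense on $W'\times(0,\infty)$, with no interface visible. After the substitution $\tilde v=e^{-\lambda t}v$ it solves the heat equation there.

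\textbf{Step 1 (interior estimate on a unit time window).} I would invoke the standard interior regularity estimate for the heat equation~\cite[Sec.~2.3.3 and 7.1]{evans}. For cylinders $Q_s=W\times(s-\delta/2,s]$ and $Q_s'=W'\times(s-\delta,s]$,
\[
\sup_{Q_s}\bigl(|v|+|\nabla v|\bigr)\le C_1(\delta,W,W',\eps,\lambda)\,\|v\|_{L^2(Q_s')}.
\]
This follows from Caccioppoli-type energy estimates iterated in $H^k$ with cutoff functions, together with Sobolev embedding, or alternatively from mean-value or local heat-kernel bounds for $\tilde v$, using $e^{\pm\lambda\delta}$ factors to pass between $v$ and $\tilde v$. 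The constant is translation invariant in time, because the equation is autonomous.

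\textbf{Step 2 (trace bounds).} On $\Gam$, the trace $u|_\Gam=v|_\Gam$ and the flux $\partial_{n_\Om}u=\nabla v\cdot n_\Om$ (defined a.e.) are pointwise bounded by $\sup_W(|v|+|\nabla v|)$. Hence
\[
\|u(t)|_\Gam\|_{L^2(\Gam)}+\|\partial_{n_\Om}u(t)|_\Gam\|_{L^2(\Gam)}\le 2|\Gam|^{1/2}\sup_{W}\bigl(|v(t)|+|\nabla v(t)|\bigr),
\]
where $|\Gam|<\infty$ is the surface measure of the bounded Lipschitz interface. Because $v$ is smooth across $\Gam$, the one-sided traces and fluxes agree with those of $v$, as in Proposition~\ref{prop:wellposed}.

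\textbf{Step 3 (decay).} For $t\ge\delta$, apply Step 1 with $s=t$ and bound $\|v\|_{L^2(Q_t')}^2\le\int_{t-\delta}^t\|v(\tau)\|_{L^2(\Dom)}^2d\tau\le \delta C_\Dom^2e^{-2c(t-\delta)}\|v_0\|^2_{L^2(\Dom)}$ by \eqref{eq:target-decay}. This yields the claim with $C_\delta=2|\Gam|^{1/2}C_1\sqrt\delta\,C_\Dom e^{c\delta}$.

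For shared walls, I would replace $\Gam$ by a compact subset $K\subset\Gam$ away from the junctions. Such a $K$ still satisfies $K\Subset\Dom$, so the same argument applies verbatim with constants depending on $K$.

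The main obstacle is Step 1: stating the interior sup-bound for $\nabla v$ cleanly with a constant uniform in time. I would handle it by citing the interior smoothness estimates for the heat equation in~\cite{evans} applied to $\tilde v$. No boundary or interface regularity is needed, which is precisely why the hypothesis $\Gam\Subset\Dom$ appears.
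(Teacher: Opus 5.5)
Your proposal is correct and follows essentially the paper's route. Both arguments apply interior parabolic regularity to the single smooth function $v$ on a backward cylinder around $\Gam$, and both feed in the target decay \eqref{eq:target-decay} over that time window. The one difference is minor: the paper bounds $\|v(t)\|_{H^2(N)}$ and then uses the Lipschitz trace inequality for $v$ and $\nabla v$, whereas you bound $\sup_W(|v|+|\nabla v|)$ and integrate pointwise over $\Gam$, which is legitimate because $v$ is smooth near $\Gam$ and $\Gam$ has finite surface measure.
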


\begin{proof}
Cover $\Gam$ by finitely many Lipschitz coordinate neighborhoods whose closures lie in a larger open set $N'\Subset\Dom$. A standard interior estimate for the heat equation on the backward cylinder $N'\times(t-\delta/2,t)$, obtained by localizing the fundamental-solution representation~\cite[Sec.~2.3.1]{evans}, together with the target energy estimate \eqref{eq:target-decay}, gives
\[
\begin{aligned}
\|v(t)\|_{H^2(N)}
&\le C_{\delta,N}\|v\|_{L^2((t-\delta/2,t)\times\Dom)}\\
&\le C'_{\delta,N}e^{-ct}\|v_0\|_{L^2(\Dom)}
\end{aligned}
\]
for a smaller neighborhood $N$ of the interface: the interior estimate supplies the gain of derivatives, and \eqref{eq:target-decay} the exponential factor. The Lipschitz trace inequality gives $\|v|_\Gam\|_{L^2}\lesssim\|v\|_{H^1(N)}$. Applied to each component of $\nabla v\in H^1(N)^n$, it also gives $\|\partial_{n_\Om}v|_\Gam\|_{L^2}\lesssim\|v\|_{H^2(N)}$. The same argument works on compact subsets away from shared-boundary junctions.
\end{proof}

\emph{Implementing the target feedback.} The feedback $\F$ of the ball design can be evaluated in two ways, and both belong entirely to the controller. Modewise, one applies the explicit gains to finitely many angular modes of the concatenated state, and this truncation is exact in the following sense. The angular modes are modes of the \emph{target} ball, where they decouple; the irregular domain has no usable modal basis, and its eigenfunctions are precisely what this framework declines to compute. The $\ell$-th harmonic family with zero boundary data decays whenever $\eps\, j_{\nu(\ell),1}^2/R^2 > \lambda$, where $j_{\nu,1}$ is the first positive zero of the Bessel function $J_\nu$ and $\nu(\ell)=\ell+(n-2)/2$: a closed-form criterion on the ball, with no eigencomputation on $\Om$ anywhere. Controlling the finitely many modes that violate this inequality preserves exponential stability with rate $\min\{c,\ \min_{\ell>M}(\eps j_{\nu(\ell),1}^2/R^2-\lambda)\}$, by Proposition~\ref{prop:equivalence} applied to the truncated design; no spectral knowledge of the physical plant is involved. Alternatively, one evaluates the physical-space formula \eqref{eq:physical-law} directly, with no modal truncation at all.

\emph{Choice of center and radius (ball targets).} A smaller containing ball reduces the virtual volume and raises the first Dirichlet eigenvalue, so fewer target modes may require control. The kernel bounds also grow with the dimensionless size $\sqrt\mu R$. A natural choice is therefore the Chebyshev center of $\overline\Om$ and a radius slightly larger than its circumradius. For a radially varying coefficient, Section~\ref{sec:radial} fixes the center at the coefficient's symmetry point.

\emph{Shared boundaries.} When $\Gam_u\neq\emptyset$, the boundary conditions agree on the shared part by Assumption~\ref{ass:compat}(ii). Only $\Gam_c$ is actuated and sensed. The types must match on $\Gam_u$, where no virtual region is present to exchange trace and flux. Proposition~\ref{prop:neumann} and the rectangular designs of Section~\ref{sec:square} provide further choices for such configurations. At junctions with $\partial\Dom$, the interface relations are understood through the weak formulation; positive-time classical statements apply away from those points.

\emph{Robustness to model mismatch.} The actual plant remains one subsystem of the feedback interconnection. Coefficient uncertainty affects its matching with the virtual continuation, and the concatenated target equation then acquires piecewise distributed terms and interface disturbances. The target decay margin and standard parabolic energy estimates give reason to expect tolerance to sufficiently small coefficient and sensing errors. A quantitative threshold would also involve trace constants and the target feedback gains, especially their growth with $\sqrt\mu R$. We do not derive that threshold here; it is a natural extension of the nominal result.

\section{Output Feedback by Lifting the Target Observer}\label{sec:observer}

We now remove the full-state assumption. The state-feedback controller knows its virtual state but cannot evaluate the part of $\F[v]$ that depends on $u$ inside $\Om$. It therefore runs the observer on the target domain, where the required design is known. The observer is corrected by comparing the boundary data generated by the virtual state with its own prediction. Its estimate supplies $\F[\hat v]$ at the target boundary.

Formally, consider a Dirichlet-actuated plant; the physical output is the collocated interface measurement
\begin{equation}\label{eq:physical-measurement}
y_\Gam(t,\cdot)=\partial_{n_\Om} u(t,\cdot)\big|_\Gam ,
\end{equation}
which is precisely the datum the controller feeds to the virtual domain. The virtual domain then \emph{manufactures} the measurement that the target observer expects on the non-shared part of $\Sigma_m$: since $z=v$ on $\Omext$, which contains a neighborhood of $\Sigma_m\setminus\Gam_u$,
\begin{equation}\label{eq:measurement-identity}
\partial_{n_{\Dom}} z(t,\cdot)\big|_{\Sigma_m\setminus\Gam_u} = \partial_{n_{\Dom}} v(t,\cdot)\big|_{\Sigma_m\setminus\Gam_u},
\end{equation}
and the left-hand side is computable from the controller state. We call it the \emph{virtual measurement}: the physical interface data propagate through the virtual PDE to the target boundary. On $\Sigma_m\cap\Gam_u$, the target observer instead uses the physical measurement required by Assumption~\ref{ass:compat}(iv).

\subsection{Architecture}

The only quantity the state-feedback controller cannot compute without the full state is the feedback $\F[v]$, whose kernel integrates over all of $\Dom$, including the unknown physical region. Certainty equivalence therefore enters at the target boundary, and only there. The output-feedback controller carries two internal states, the virtual state $z$ and a target-domain observer $\hat v$. For the ball target ($\Sigma_u=\emptyset$, $\Sigma_c=\Sigma_m=\partial\Ball$), writing $\tilde y := \partial_{n_{\Ball}} z|_{\partial\Ball}-\partial_{n_{\Ball}}\hat v|_{\partial\Ball}$ for the (controller-internal) innovation,
\begin{align}
z_t &= \eps\Delta z + \lambda z, && x\in\Omext, \label{eq:of-z}\\
\partial_{n_\Om} z &= y_\Gam, && x \in \Gam, \label{eq:of-z-gamma}\\
z &= \F[\hat v], && x\in\partial\Ball, \label{eq:of-z-outer}\\[2pt]
\hat v_t &= \eps\Delta\hat v + \lambda\hat v + \mathcal{P}\bigl[\tilde y\bigr], && x\in \Ball, \label{eq:of-obs}\\
\hat v &= \F[\hat v], && x\in \partial\Ball, \label{eq:of-obs-bc}
\end{align}
with $\mathcal P$ the explicit output injection \eqref{eq:obs-gain}, and the plant is actuated with the \emph{true} virtual trace,
\begin{equation}\label{eq:of-actuation}
u\big|_\Gam = z\big|_\Gam .
\end{equation}
The target observer is initialized with $\hat v_0=0$. The virtual initial state is chosen to match the known interface trace. Since $\Gam\Subset\Ball$, the interface and the target boundary are separated. The trace theorem therefore gives a bounded lifting operator
\begin{equation}\label{eq:trace-lifting}
E_\Gam:H^{1/2}(\Gam)\longrightarrow H^1(\Omext),
\end{equation}
such that $(E_\Gam g)|_\Gam=g$ and $(E_\Gam g)|_{\partial\Ball}=0$~\cite[Ch.~18]{leoni}. Thus one may set $z_0=E_\Gam(u_0|_\Gam)$. The interface state is available at initialization: it is the imposed variable for Dirichlet actuation and the measured variable for Neumann actuation. If $u_0|_\Gam=0$, then $z_0=0$ is compatible. Equation~\eqref{eq:of-actuation} always uses the true controller state $z$, while the observer supplies the unavailable feedback value $\F[\hat v]$.

\subsection{Output-feedback transfer}

\begin{theorem}[Output-feedback transfer]\label{thm:output-feedback}
Let a Dirichlet-actuated plant and the Dirichlet ball configuration be compatible. Suppose $u_0\in H^1(\Om)$ and choose $z_0\in H^1(\Omext)$ so that
\begin{equation}\label{eq:of-compatible-data}
u_0|_\Gam=z_0|_\Gam,\qquad z_0|_{\partial\Ball}=0.
\end{equation}
Initialize the target observer with $\hat v_0=0$. Then the output-feedback loop \eqref{eq:of-z}--\eqref{eq:of-actuation} has a unique weak solution and
\begin{equation}\label{eq:of-transfer}
\begin{aligned}
\|u(t)\|_{H^1(\Om)}
&\leq M_{\mathrm{of}} e^{-c_{\mathrm{of}}t}\\[-2pt]
&\quad\times
\bigl(\|u_0\|_{H^1(\Om)}^2+\|z_0\|_{H^1(\Omext)}^2\bigr)^{1/2},
\qquad t\ge0.
\end{aligned}
\end{equation}
With $z_0=E_\Gam(u_0|_\Gam)$, there is a constant $C_\Gam$ depending only on the two domains such that
\begin{equation}\label{eq:of-transfer-lifted}
\|u(t)\|_{H^1(\Om)}
\leq C_\Gam M_{\mathrm{of}}e^{-c_{\mathrm{of}}t}\|u_0\|_{H^1(\Om)}.
\end{equation}
\end{theorem}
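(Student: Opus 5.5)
The plan mirrors the state-feedback proof: show that the concatenation $v=\concat(u,z)$, together with the observer $\hat v$, is exactly the output-feedback closed loop on the ball from~\cite[Thm.~2]{nball}, and then restrict.

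\emph{Step 1 (equivalence).} Equations \eqref{eq:of-z}--\eqref{eq:of-z-gamma} and \eqref{eq:of-actuation}, coupled with the plant \eqref{eq:plant} on $\Om$, impose trace continuity through the actuation and flux continuity through the Neumann datum $y_\Gam$. This is the transmission system of Section~\ref{sec:coupled}, with outer Dirichlet datum $z=\F[\hat v]$ on $\partial\Ball$. I will apply the argument of Proposition~\ref{prop:equivalence} verbatim, with $\F[\concat(u,z)]$ replaced by the exogenous-looking datum $\F[\hat v]$; the proof there never used the form of the outer datum. Lemma~\ref{lem:gluing} and the broken weak form then show that $(u,z,\hat v)$ is a weak solution if and only if $(v,\hat v)$ solves
\[
v_t=\eps\Delta v+\lambda v \text{ in } \Ball,\qquad v|_{\partial\Ball}=\F[\hat v],
\]
with the observer \eqref{eq:of-obs}--\eqref{eq:of-obs-bc}.

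By the measurement identity \eqref{eq:measurement-identity}, the innovation is $\tilde y=\partial_r v|_{\partial\Ball}-\partial_r\hat v|_{\partial\Ball}$. This is exactly the observer \eqref{eq:obs-ball-pde}--\eqref{eq:obs-ball-bc} driven by the true ball measurement \eqref{eq:ball-measurement}, with $U=\F[\hat v]$. So $(v,\hat v)$ is precisely the output-feedback ball closed loop.

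\emph{Step 2 (data and invocation).} By \eqref{eq:of-compatible-data} and Lemma~\ref{lem:gluing}, $v_0=\concat(u_0,z_0)\in H^1(\Ball)$. Its trace on $\partial\Ball$ vanishes, so $v_0\in H^1_0(\Ball)$, and \eqref{eq:gluing-norm} gives
\[
\|v_0\|_{H^1(\Ball)}^2=\|u_0\|_{H^1(\Om)}^2+\|z_0\|_{H^1(\Omext)}^2.
\]
Together with $\hat v_0=0$, this meets the hypotheses of~\cite[Thm.~2]{nball}. That theorem supplies existence and uniqueness for $(v,\hat v)$; Step 1 transfers uniqueness back to $(u,z,\hat v)$. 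It also gives the decay \eqref{eq:of-ball-decay}. Since $\|u(t)\|_{H^1(\Om)}\le\|v(t)\|_{H^1(\Ball)}$ (restriction has norm at most one in $H^1$), \eqref{eq:of-transfer} follows.

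\emph{Step 3 (lifted initialization).} With $z_0=E_\Gam(u_0|_\Gam)$, boundedness of $E_\Gam$ in \eqref{eq:trace-lifting} and the trace theorem on the Lipschitz domain $\Om$ give
\[
\|z_0\|_{H^1(\Omext)}\le\|E_\Gam\|\,C_{\rm tr}\|u_0\|_{H^1(\Om)}.
\]
Hence \eqref{eq:of-transfer-lifted} holds with $C_\Gam=(1+\|E_\Gam\|^2C_{\rm tr}^2)^{1/2}$, which depends only on the two domains.

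\emph{Main obstacle.} I expect the delicate step to be Step 1, specifically the justification that the weak notions match. The ball theorem is stated for its own notion of solution, presumably via the transformation $v=(I+\Lop)w$. The innovation involves the normal trace $\partial_r z|_{\partial\Ball}$, which must be meaningful. To handle this, I would first use $\Gam\Subset\Ball$: the interface is separated from the outer sphere, so in a collar near $\partial\Ball$ the state $z$ coincides with $v$. There $\partial_r z=\partial_r v$ in whatever sense the ball theorem gives $\partial_r v$, namely $L^2$ in time with values in $H^{-1/2}$, or better for $H^1_0$ data via the target $w\in L^2(H^2)$. With this identification the equivalence is purely a matter of bookkeeping.
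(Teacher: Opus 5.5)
Your proposal is correct and follows the paper's proof essentially step for step. Both arguments use gluing with \eqref{eq:of-compatible-data} to get $v_0\in H^1_0(\Ball)$, Proposition~\ref{prop:equivalence} with outer datum $\F[\hat v]$, the measurement identity to recover the true ball innovation, then \eqref{eq:of-ball-decay}, restriction, and the trace/lifting bound. Your explicit constant $C_\Gam=(1+\|E_\Gam\|^2C_{\rm tr}^2)^{1/2}$ and your remark that $\partial_r z|_{\partial\Ball}$ is meaningful because $z=v$ on a collar of $\partial\Ball$ spell out two points the paper leaves implicit.
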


\begin{proof}
Condition \eqref{eq:of-compatible-data} and Lemma~\ref{lem:gluing} give $v_0=\concat(u_0,z_0)\in H^1(\Ball)$. Its outer trace is zero, hence $v_0\in H^1_0(\Ball)$. The actuation \eqref{eq:of-actuation} and measurement \eqref{eq:of-z-gamma} impose the two interface conditions, so Proposition~\ref{prop:equivalence} identifies $v=\concat(u,z)$ with the ball plant driven by $\F[\hat v]$. Identity \eqref{eq:measurement-identity} supplies the ball observer with its true boundary measurement. The pair $(v,\hat v)$ is therefore the output-feedback loop of Section~\ref{sec:ball-recap}, and \eqref{eq:of-ball-decay}, restriction, and \eqref{eq:gluing-norm} give \eqref{eq:of-transfer}.

For the lifted choice, the trace and lifting estimates give
\[
\|z_0\|_{H^1(\Omext)}
\le C_E\|u_0|_\Gam\|_{H^{1/2}(\Gam)}
\le C_EC_{\rm tr}\|u_0\|_{H^1(\Om)}.
\]
Substitution in \eqref{eq:of-transfer} proves \eqref{eq:of-transfer-lifted}.
\end{proof}

\emph{Sensing topology.} The lifted design is \emph{collocated on extended boundaries by construction}: the interface through which the domain is extended carries the actuation and the measurement at the same location, and this collocated pair is structurally necessary. Without the interface measurement, the virtual domain cannot be synchronized to the plant, and no information about the plant reaches the controller at all. Any further measurements must be those of the target design itself: on shared boundaries they are physical measurements (the plant boundary coincides there with the target boundary, so a wall sensor reads the concatenated state directly, and a target observer using that boundary, such as the anti-collocated rectangle observer of Section~\ref{sec:square}, lifts verbatim); on the remaining target boundary they are virtual, computed from $z$. Anti-collocated configurations \emph{through the extension}, with sensing only away from the extended boundary, are not covered by the lifting argument and remain open.

The target observer is not redesigned for the irregular geometry, and $\hat u=\hat v|_\Om$ estimates the physical state. For a Neumann-actuated plant, the interface trace is measured and the flux is assigned; the same concatenation argument applies once a compatible target observer is specified. The decay rate in Theorem~\ref{thm:output-feedback} is the target rate. Estimate \eqref{eq:of-transfer} keeps the target prefactor when written in terms of the concatenated initial state, while \eqref{eq:of-transfer-lifted} includes the norm of the geometric lifting.

For a sufficiently smooth interface and $H^2$ initial data, a standard higher-order trace lifting can also match the initial normal derivative~\cite[Ch.~18]{leoni}. This uses the available state and flux data at the interface. It removes the initial flux mismatch but is not needed for the $H^1$ result above.

\section{Numerical Experiments}\label{sec:numerics}

\subsection{Setup and implementation}

We test the framework on the four geometries of Fig.~\ref{fig:star-embedding}: a star-shaped domain with polar boundary $\rho(\theta) = 0.95(1 + 0.18\cos 3\theta - 0.08\sin 2\theta)$; a horseshoe (a thick annular sector with a $63^\circ$ opening), which is star-shaped with respect to no point; a multiply connected blob with an off-center circular hole, whose extension region has two components; and, for the shared-wall configuration, an irregular cavity in the rectangle $[0,2]\times[0,1.5]$, an irregular relative of the piano-shaped domain considered in~\cite{VK-Milano}. The disk-target geometries are embedded in $\Ball$ with $R=1.35$. A single parameter set is used throughout: $\eps = 0.05$, $\lambda = 0.85$, $c = 1.0$, so $\mu = 37$. The choice of $\lambda$ makes every physical plant open-loop unstable: the numerically computed first Dirichlet eigenvalues give $\eps\lambda_1(\Om) = 0.348$ (star), $0.637$ (horseshoe), $0.492$ (holed), and $0.518$ (cavity), all below $\lambda$. (These eigenvalues certify the instability of the uncontrolled baselines; the design itself never uses them.) All simulations are implemented in Python/SciPy~\cite{scipy} with sparse LU factorizations~\cite{superlu} and standard finite-volume and finite-difference discretizations~\cite{leveque-fdm}, on Cartesian grids of $201\times 201$ points, with $M=12$ angular modes and implicit Euler steps $\Delta t = 2.5\cdot 10^{-3}$ over the horizon $T=10$ ($T=14$ for output feedback).

Following the implementation philosophy of Section~\ref{sec:reduction}, the plant is treated as a physical system: it is discretized on its own index set, with its own factorization, and the controller (the virtual domain, plus the observer in the output-feedback experiments) is a separate solver. At each time step, two block Gauss--Seidel transmission sweeps exchange the interface data, a standard partitioned treatment of transmission problems in domain decomposition~\cite{quarteroni-valli}. The output-feedback run uses four sweeps. The target feedback is applied on $\Sigma_c$ from projected modal coefficients of the concatenated data. The online burden is therefore of the same nature as in a full-order observer implementation: sparse time stepping for parabolic PDEs. Output feedback couples the virtual continuation and observer through finitely many modal channels, so they are advanced as one block system rather than through a new online design calculation. As an independent check of Proposition~\ref{prop:equivalence} and Theorem~\ref{thm:main}, a second solver computes the target closed loop with radial finite volumes for each angular mode and restricts it to $\Om$. The two computations use different spatial discretizations and evolution solvers. Their discrete per-mode closed-loop and observer-error operators reproduce the theoretical spectral abscissas of Section~\ref{sec:toolbox} to three decimal places.

\subsection{State feedback}

Fig.~\ref{fig:sf-energy} shows the physical energies for the three disk-target geometries, against the uncontrolled baselines. Each uncontrolled plant grows at the rate predicted by its first eigenvalue, while the closed loop decays at the rate set on the target domain, uniformly across the three geometries, as Theorem~\ref{thm:main} predicts. The restriction and partitioned solutions track each other closely; the residual differences concentrate on the interfaces least resolved by the Cartesian mask and leave the energy decay unaffected. Refining the grid, time step, and mode count shrinks those differences and changes the final physical energy by under one percent. The figure also displays the physical actuation energy $\|u|_\Gam\|^2_{L^2(\Gam)}$, which decays after the backstepping transient, as Corollary~\ref{cor:actuation} asserts. Fig.~\ref{fig:sf-3d} shows the spatial decay on the three geometries.

\begin{figure*}[t]
\centering
\includegraphics[width=0.95\textwidth]{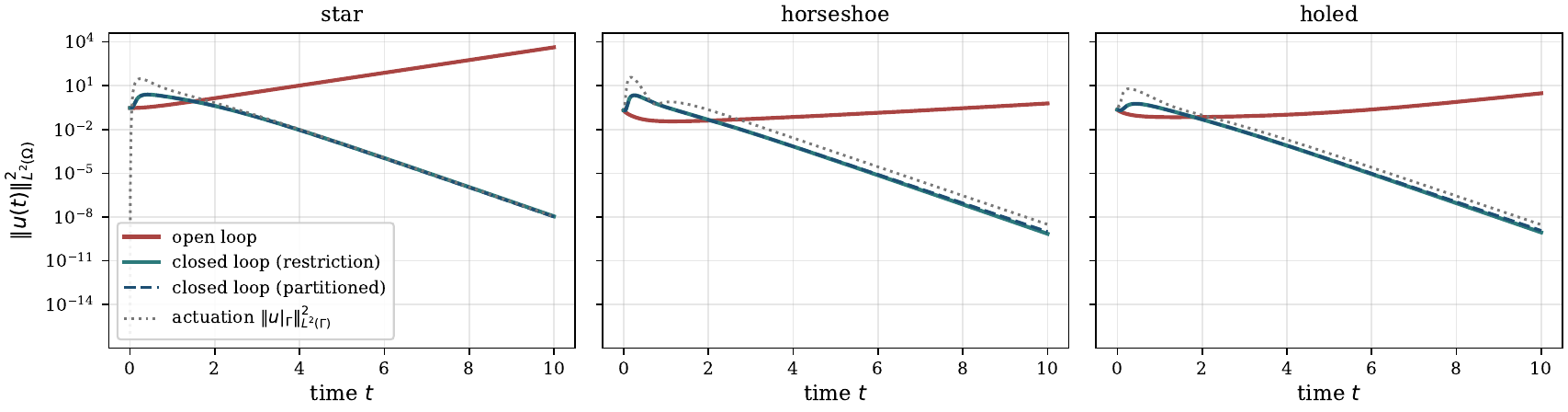}
\caption{State feedback on the three disk-target geometries: uncontrolled plant (red), closed loop via restriction (teal), closed loop via the partitioned plant/controller implementation (dashed blue), and physical actuation energy $\|u|_\Gam\|^2_{L^2(\Gam)}$ (dotted gray), which decays exponentially as asserted by Corollary~\ref{cor:actuation}. All energies are squared $L^2$ norms.}
\label{fig:sf-energy}
\end{figure*}

\begin{figure*}[t]
\centering
\includegraphics[width=0.97\textwidth]{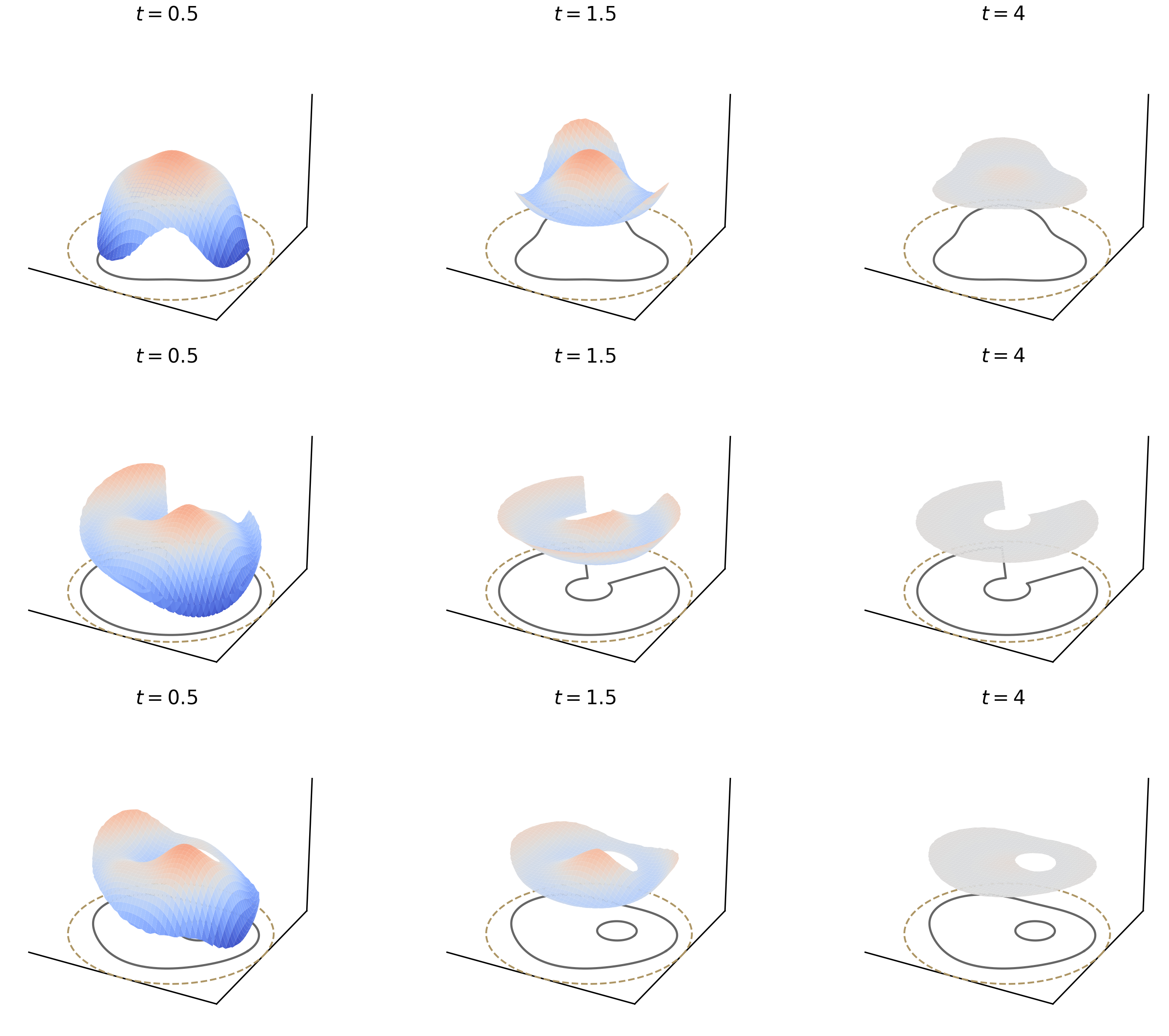}
\caption{Closed-loop state on the physical domains (partitioned implementation) at $t=0.5$, $1.5$, $4$; rows: star, horseshoe, multiply connected domain. The vertical scale is fixed per row, so the flattening of the surfaces displays the decay; the solid curve at the base is $\Gam$ and the dashed circle is $\partial\Ball$. Snapshots start at $t=0.5$, past the initial transient.}
\label{fig:sf-3d}
\end{figure*}

\subsection{Output feedback}

The output-feedback experiment implements the architecture of Section~\ref{sec:observer} with zero prior knowledge in the controller: $z_0=0$ and $\hat v_0=0$. The plant initial state is a smooth bump compactly supported inside $\Om$, so its trace on $\Gam$ vanishes and the zero virtual initialization satisfies the compatibility condition in Theorem~\ref{thm:output-feedback}. Sensing is used only on $\Gam$, and certainty equivalence enters only at $\partial\Ball$. The controller carries two internal simulations, the virtual domain on the extension region and the observer on the full target domain. The kernel scale is large for these parameters: $I_0(\sqrt\mu R)\approx 521$. The observer must therefore be discretized consistently with the virtual domain (here, on the same Cartesian grid), since the innovation compares the virtual measurement with the observer's prediction. The two controller states are integrated as one implicit system. Their couplings use $2(2M{+}1)$ modal channels, so one step costs three sparse solves and a small dense solve. The plant--controller exchange remains partitioned; we use four transmission iterations per step and $\Delta t \le 5\cdot 10^{-3}$.

Fig.~\ref{fig:output-feedback} shows the result on the star domain. The plant energy first rises while the observer locks on, the price of zero prior knowledge, then falls by roughly eight orders of magnitude, its tail steepening toward the state-feedback rate as the estimate converges; the observer-error energy decays monotonically, by about twelve orders over the horizon. Fig.~\ref{fig:of-3d} shows the same transient in space: at $t=0.5$ the error is essentially the unknown physical state inside $\Gam$, and it drains through the interface measurement well before the plant settles.

\begin{figure*}[t]
\centering
\includegraphics[width=0.85\textwidth]{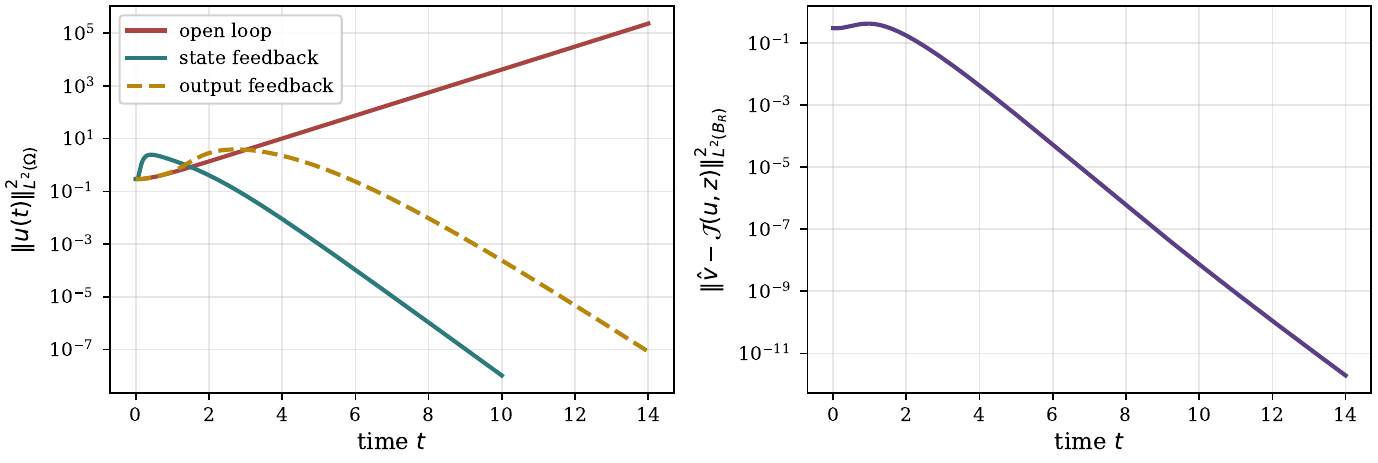}
\caption{Output feedback on the star domain, with $z_0=\hat v_0=0$ and a compatible compactly supported plant state. Left: physical energy under output feedback (dashed gold) vs.\ state feedback (teal) and the uncontrolled plant (red). Right: observer error energy on the ball.}
\label{fig:output-feedback}
\end{figure*}

\begin{figure*}[t]
\centering
\includegraphics[width=0.97\textwidth]{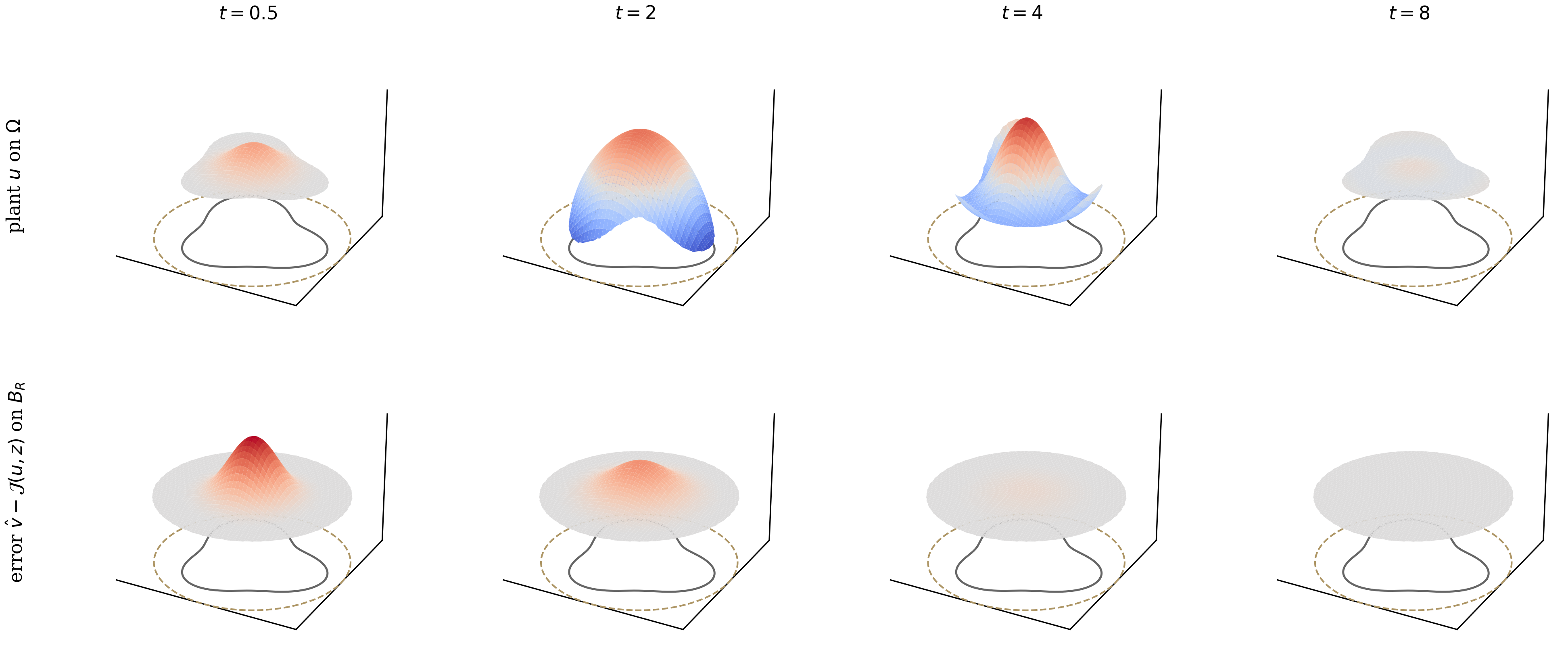}
\caption{Output feedback in space. Top row: plant state $u$ on $\Om$. Bottom row: observer error $\hat v - \concat(u,z)$ on the full target disk. The error starts as the unknown physical state supported inside $\Gam$ and decays through the interface measurement. Vertical scales are fixed per row.}
\label{fig:of-3d}
\end{figure*}

\subsection{Rectangular target with shared walls}

The cavity is controlled through the rectangular target $[0,2]\times[0,1.5]$: the flat bottom and side walls of the cavity are shared, uncontrolled, homogeneous-Dirichlet boundaries; the design-level input acts on the top side of the rectangle, which is virtual; and the physical actuation transmitted to the cavity is the trace on the wavy interface $\Gam$. The thin virtual strip keeps the target close to the physical domain, reducing both virtual volume and gain size. The design is the Fourier counterpart of the ball designs: sine modes in $x$, one-dimensional kernels in $y$ per mode, with only the modes satisfying $\lambda - \eps(k\pi/L_x)^2 + c > 0$ actuated; the rest are open-loop stable, by the same truncation argument as on the ball. Fig.~\ref{fig:cavity-3d} shows the closed-loop evolution, with the actuation wave entering from the virtual strip at $t=0.5$--$1$. The uncontrolled energy grows at the rate its first eigenvalue predicts, while the closed loop drives it down by more than eight orders of magnitude over the horizon.

\begin{figure*}[t]
\centering
\includegraphics[width=0.92\textwidth]{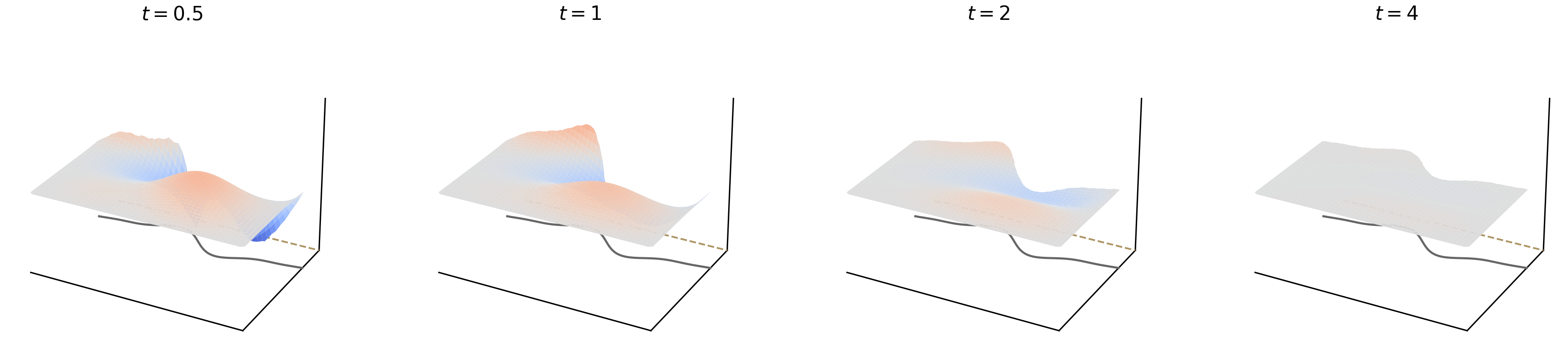}
\caption{Shared-wall cavity controlled through the rectangular target $[0,2]\times[0,1.5]$ (actuation on the top side only) at $t=0.5$, $1$, $2$, $4$. The cavity walls are uncontrolled and shared with the target; the wavy interface $\Gam$ receives its Dirichlet data from the thin virtual strip above it, through which the actuation wave is seen entering in the first two panels. The vertical scale is common to all panels.}
\label{fig:cavity-3d}
\end{figure*}

\section{Conclusions}\label{sec:conclusions}

This paper develops a domain-extension framework for boundary stabilization of reaction-diffusion equations on bounded Lipschitz domains. The transfer argument accepts any compatible target-domain configuration. In state feedback, fixing that configuration makes the decay rate and prefactor independent of the shape of the embedded physical domain. Output feedback preserves the target estimate when it is written in terms of the concatenated initial state; expressing it only through the physical $H^1$ norm introduces the lifting constant. A ball gives a full-boundary construction for any bounded domain, while rectangular targets permit shared straight walls and partial actuation. For the constant-coefficient cases treated here, the offline gains are closed form. The paper also derives a Neumann-actuated ball law and lifts the ball observer under compatible $H^1$ initialization.

The controller solves the virtual PDE online, but this is the same kind of operation already required by full-order PDE observers. It should therefore be compared with observer-based PDE control, not with static finite-dimensional feedback. In state feedback the controller evolves one virtual PDE; in output feedback it evolves the virtual continuation and the target observer, coupled as a single block system. The experiments use separate plant and controller solvers and quantify their agreement with the target restriction. Model mismatch deserves a separate analysis: the stable target supplies a decay margin, but the admissible coefficient and sensing errors should depend on trace constants, target size, and gain magnitude.

The same transfer proof applies in higher dimensions when the target design and the geometric assumptions are available. Other open points include sensing only on uncontrolled shared boundaries, interfaces outside the Lipschitz setting, richer canonical targets such as annular sectors that would hug domains like the horseshoe more tightly than a disk, and target configurations for hyperbolic systems~\cite{belhadjoudja-hyp1,belhadjoudja-hyp2}. The virtual PDE can also be viewed as an interface-data-to-boundary-data operator. Approximating that operator offline, for example with the neural-operator ideas of~\cite{bhan-no}, may reduce the online cost, but its effect on closed-loop stability would have to be quantified.


\end{document}